\numberwithin{equation}{section}
\theoremstyle{definition}
\numberwithin{equation}{section}
\date{\today}
\newtheorem{theorem}{\bf Theorem}[section]
\newtheorem{remark}{\bf Remark}[section]
\newtheorem{proposition}{Proposition}[section]
\newtheorem{definition}{Definition}[section]
\newtheoremstyle
{remarkstyle}
{}
{11pt}
{}
{}
{\bfseries}
{:}
{     }
{\thmname{#1} \thmnumber{#2} }
\theoremstyle{remarkstyle}
\begin{document}
	\title{Some Spatial Point Processes of Poisson Family}
	 \author[Pradeep Vishwakarma]{Pradeep Vishwakarma}
	 \address{Pradeep Vishwakarma, Theoretical Statistics and Mathematics Units,
	 	Indian Statistical Institute, Kolkata, 700108, West Bengal, India.}
	 \email{vishwakarmapr.rs@gmail.com}

	\subjclass[2010]{Primary : 60G55; Secondary: 60G57, 60G60}
	
	\keywords{Poisson random field, generalized Poisson random field, inverse stable subordinator, fractional generalized Poisson random field, thinning, Caputo fractional derivative, generalized Skellam point process}
	\date{\today}
	
	\maketitle
	\begin{abstract}
	Spatial Poisson point processes on finite-dimensional Euclidean space provide fundamental mathematical tools for modeling random spatial point patterns. In this paper, we introduce and analyze several Poisson-type spatial point processes. In particular, we propose and study a point process, namely, the generalized Poisson random field (GPRF), in which more than one point can be observed with positive probability, within a rectangular region having infinitesimal Lebesgue measure. A thinning of the GPRF into independent GPRFs with reduced rate parameters is discussed. Furthermore, we consider these processes indexed by the positive quadrant of the plane and analyze their fractional variants. Various distributional properties of these processes and related governing differential equations are obtained. Later, we define and analyze a spatial Skellam-type point process via GPRF. Moreover, a fractional variant of it in the two parameter case is studied in detail.
	\end{abstract}
	
\section{Introduction}\label{sec1}
 A spatial Poisson point process on Euclidean space models the random distribution of points in a finite-dimensional domain. It is a fundamental stochastic model for analyzing spatial point patterns, where points represent the locations of different types of objects or events.   Let  $\mathbb{R} _+ = [0,\infty)$ denote the set of non-negative real numbers. For $d\ge1$, let $\mathcal{A}_d=\{[\textbf{0},\textbf{x}],\ \textbf{x}\in\mathbb{R}^d_+\}$ denote the collection of rectangles in $\mathbb{R}^d_+$, where $[\textbf{0},\textbf{x}]=\prod_{j=1}^{d}[0,x_j]$ for $\textbf{x}=(x_1,\dots,x_d)\in\mathbb{R}^2_+$. For $A\in\mathcal{A}_d$, we denote its Lebesgue measure by $|A|$.  The spatial Poisson point process (or Poisson random field) is a non-negative integer-valued random measure $\{N(A),\ A\in\mathcal{A}_d\}$, which is characterized as follows:\\
  \noindent (i) There exist a constant $\lambda>0$ such that for any $A\in\mathcal{A}_d$ with $|A|<\infty$, the random points count $N(A)$ in set $A$ has Poisson distribution with mean $\lambda|A|$;\\
  \noindent (ii) for any finite collection $\{A_1,A_2,\dots,A_m\}\subset\mathcal{A}_d$ of disjoint  sets, the random variables $N(A_1)$, $N(A_2)$, $\dots,N(A_m)$ are independent  of each other.\\  
  We call $\{N(A),\ A\in\mathcal{A}_d\}$ the Poisson random field (PRF). Its infinitesimal probabilities characterization is as follows (see \cite{Stoyen1995}):
  Let $A\in\mathcal{A}_d$ be a rectangle with infinitesimal Lebesgue measure, that is, $o(|A|)/|A|\to0$ as $|A|\to0$. Then, the probability of no point in $A$ is $\mathrm{Pr}\{N(A)=0\}=1-\lambda|A|+o(|A|)$, the probability of exactly one point in $A$ is $\mathrm{Pr}\{N(A)=1\}=\lambda|A|+o(|A|)$ and the probability of more than one point in $A$ is $\mathrm{Pr}\{N(A)>1\}=o(|A|)$. For $d=2$, the PRF can be defined as a non-negative integer valued two parameter L\'evy process (for definition, see Section \ref{gprf2sec}) $\{N(s,t), (s,t)\in\mathbb{R}^2_+\}$ on positive quadrant of the plane, with rectangular increments. Here,
  $ N(s,t)\coloneqq N([0,s]$ $\times[0,t])$, $(s,t)\in\mathbb{R}_+^2
  $ for all rectangle $[0,s]\times[0,t]\subset\mathbb{R}^2_+$. 
  
  Two-parameter point processes with independent rectangular increments constitute a fundamental class of planar stochastic models. A systematic study of their characterization under random time changes is of significant theoretical and practical interest. The incorporation of random time changes enhances the real life applications of these processes, enabling the representation of irregular, non-stationary behavior and complex dependence structures that cannot be adequately captured by their classical counterparts. Therefore, the time-changed point processes on $\mathbb{R}^2_+$ will provide a general and advanced mathematical framework for modeling spatial stochastic systems exhibiting long-range dependence and complex correlation structures. Such models arise naturally in various applied fields, such as statistical signal processing and financial market analysis. In financial applications, these processes are widely used to model asset returns, where the stochastic time-change mechanism is commonly interpreted as a random economic or business clock governing market activity. 
  
  Over the past few years, time-changed two-parameter Poisson processes have been extensively studied, as they provide a flexible framework for modeling complex spatial systems exhibiting long range dependence structure. For example, in \cite{Leonenko2015}, a time-changed version of the PRF on $\mathbb{R}^2_+$, driven by an independent bivariate inverse subordinator is introduced and studied. Further, various other characterization of this process are obtained in \cite{Aletti2018}. Recently, in \cite{Kataria2024}, a time fractional variants of the PRF, $\{N^{\alpha,\beta}(s,t),\ (s,t)\in\mathbb{R}^2_+\}$, $0<\alpha,\beta\leq1$ is introduced and studied. It is defined as a process whose one dimensional distribution $q^{\alpha,\beta}(n,s,t)=\mathrm{Pr}\{N^{\alpha,\beta}(s,t)=n\}$, $n\ge0$ solves the following system of fractional differential equations:
  \begin{equation*}
  	\frac{\partial^{\alpha+\beta}}{\partial t^{\beta}\partial s^{\alpha}}q^{\alpha,\beta}(n,s,t)=(n+1)\lambda q^{\alpha,\beta}(n+1,s,t)-(2n+1)\lambda q^{\alpha,\beta}(n,s,t)+n\lambda q^{\alpha,\beta}(n-1,s,t),
  \end{equation*}
  with initial conditions $q^{\alpha,\beta}(0,0,0)=q^{\alpha,\beta}(0,s,0)=q^{\alpha,\beta}(0,0,t)=1$ for all $s\ge0$ and $t\ge0$, where the partial derivatives are in the sense of Caputo fractional derivative, defined as (see \cite{Kilbas2006})
  \begin{equation}\label{caputoder}
  	\frac{\mathrm{d}^\alpha}{\mathrm{d}t^\alpha}f(t)\coloneqq\begin{cases}
  	 \frac{1}{\Gamma(1-\alpha)}\int_{0}^{t}\frac{f'(s)}{(t-s)^\alpha}\,\mathrm{d}s,\ 0<\alpha<1,\vspace{0.2cm}\\
  		
  		f'(t),\ \alpha=1.
  	\end{cases}
  \end{equation}
 On solving these equations, the explicit form of the state probabilities of the fractional PRF on $\mathbb{R}^2_+$ is derived. Some other fractional variants of the PRF, such as the space fractional and the space-time fractional PRFs are introduced and analyzed in \cite{Vishwakarma2025c}. More recently, Vishwakarma et al. \cite{Vishwakarma2025d} investigated some time-change general two parameter L\'evy processes with rectangular increments, driven by a bivariate process whose marginal components are independent inverse subordinators or increasing processes. As a particular case, they also studied time-changed variants of Poisson random fields with and without drifts. The spatial Skellam point process via PRF is introduced and studied \cite{Vishwakarma2025b}. In two parameter case, three different fractional variants of the same are investigated. Moreover, its the path integral over a rectangle in $\mathbb{R}^2_+$ is considered, and an explicit form of its Fourier transform is obtained.

Note that a Poisson point process indexed by the positive real line admits at most one arrival in a time interval of infinitesimal length. Also, from the aforementioned infinitesimal characterization of the PRF, it is clear that for any $A\in\mathcal{A}_d$ with infinitesimal Borel measure, the probability of observing more than one point in $A$ is negligible. An extension of the Poisson process is introduced in \cite{Crescenzo2016}, where within an infinitesimal time interval, multiple but finitely many arrivals can occur with positive probability. Different time-changed variants of the same process are studied by \cite{Kataria2022}. 

Motivated by all the above developments and their potential applications, in this paper, we introduce and study some spatial Poisson type point processes and their fractional variants. A brief  outline of the paper is as follows:
 
 In Section \ref{sec2}, we begin by defining a Poisson-type spatial point process on $\mathbb{R}^d_+$  for which it is possible to observe more than one point in a Borel subset of $\mathbb{R}^d_+$ with infinitesimal Lebesgue measure. That is, we introduce a non-negative integer valued random measure $\{M(A),\ A\in\mathcal{A}_d\}$ such that for disjoint rectangles $A_1,\dots,A_m\in\mathcal{A}_d$, $m\ge1$, $M(A_1),\dots,M(A_m)$ are independent random variables. Let $\mathbb{N}_0=\mathbb{N}\cup\{0\}$ denote the set of non-negative integers, and let $\Theta(k,n)=\{(n_1,n_2,\dots,n_k)\in\mathbb{N}^k_0:\sum_{j=1}^{k}jn_j=n\}$, $n\ge0$ be subsets of $\mathbb{N}^k_0$ for a fix $k\ge1$. Then, for $A\in\mathcal{A}_d$ with $|A|<\infty$, the distribution of $M(A)$ is given by
 \begin{equation*}
 	\mathrm{Pr}\{M(A)=n\}=\sum_{\Theta(k,n)}\prod_{j=1}^{k}\frac{(\lambda_j|A|)^{n_j}}{n_j!}e^{-\lambda_j|A|},\ n\ge0,
 \end{equation*}
 where $\lambda_1,\dots,\lambda_k$ are positive constants. We call it the generalized Poisson random field (GPRF). It is a spatial version of a generalized counting process introduced in \cite{Crescenzo2016}. We obtain its infinitesimal point probabilities. For $k=1$, the GPRF reduces to the PRF. Further, we establish that every GPRF is equal in distribution with a weighted sum of independent PRFs. Moreover, a compound Poisson random field representation of the GPRF is derived. 
 
 In Section \ref{gprf2sec}, we consider the GPRF on $\mathbb{R}^2_+$, and denote it by $\{M(s,t),\ (s,t)\in\mathbb{R}^2_+\}$. It is a two parameter L\'evy process with rectangular increments. We show that its probability generating function (pgf), $G(z,s,t)=\mathbb{E}z^{M(s,t)}$, $|z|\leq 1$ solves the following differential equation:
 \begin{equation*}
 	\frac{\partial^2}{\partial t\partial s}G(z,s,t)=\sum_{j=1}^{k}\lambda_j(z^j-1)G(z,s,t)+\sum_{j=1}^{k}\sum_{j'=1}^{k}\lambda_j\lambda_{j'}\sum_{r=1}^{j'}(z^{j+j'-r}-z^{j'-r})\frac{\partial}{\partial\lambda_1}G(z,s,t),
 \end{equation*}
 	with initial conditions $G(z,0,t)=G(z,s,0)=G(z,0,0)=1$ for all $s,t\ge0$. An approximation of the GPRF is obtained in the finite dimensional. Additionally, we discuss a thinning of the PRF on $\mathbb{R}^2_+$ into independent PRFs with reduced rates, which is then used for the thinning of a GPRF into independent GPRFs.
 	
 	In Section \ref{sec4}, we define a fractional variant of the GPRF on $\mathbb{R}^2_+$. For $0<\alpha,\beta<1$, let $\{L^\alpha(t),\ t\ge0\}$ and $\{L^\beta(t),\ge0\}$ be independent inverse stable subordinators that are independent of the GPRF $\{M(s,t),\ (s,t)\in\mathbb{R}^2_+\}$. We consider a time-changed two parameter process defined as follows:
\begin{equation*}
	M^{\alpha,\beta}(s,t)\coloneqq M(L^\alpha(s),L^\beta(t)),\ (s,t)\in\mathbb{R}^2_+.
\end{equation*}
We call $\{M^{\alpha,\beta}(s,t),\ (s,t)\in\mathbb{R}^2_+\}$, the fractional GPRF (FGPRF). We show that the point probabilities $P^{\alpha,\beta}(n,s,t)=\mathrm{Pr}\{M^{\alpha,\beta}(s,t)=n\}$, $n\ge0$ of FGPRF solve
\begin{align*}
	\frac{\partial^{\alpha+\beta}}{\partial t^\beta\partial s^\alpha}p^{\alpha,\beta}(n,s,t)=-\sum_{j=1}^{k}\lambda_j(I-B^j)\bigg(1+\sum_{j'=1}^{k}\lambda_{j'}\sum_{r=1}^{j'}B^{j'-r}\frac{\partial}{\partial\lambda_1}\bigg)p^{\alpha,\beta}(n,s,t),
\end{align*}
with $p^{\alpha,\beta}(0,0,0)=1$, where $B$ is the backward shift operator, defined as $B^jp^{\alpha,\beta}(n,s,t)=p^{\alpha,\beta}(n-j,s,t)$. We obtain its solutions which are given as follows:
\begin{equation*}
	p^{\alpha,\beta}(n,s,t)=\sum_{\Theta(k,n)}\prod_{j=1}^{k}\frac{(\lambda_js^\alpha t^\beta)^{n_j}}{n_j!}{}_2\Psi_2\Bigg[\begin{matrix}
		(\sum_{j=1}^{k}n_j+1,1),&(\sum_{j=1}^{k}n_j+1,1)\\\\
		(\alpha\sum_{j=1}^{k}n_j+1,\alpha),&(\beta\sum_{j=1}^{k}n_j+1,\beta)
	\end{matrix}\bigg|-\sum_{j=1}^{k}\lambda_js^\alpha t^\beta\Bigg], n\ge0,
\end{equation*}
where ${}_2\Psi_2$ is the generalized Wright function, defined as (see \cite{Kilbas2006})
\begin{equation}\label{gwright}
	{}_m\Psi_l\Bigg[\begin{matrix}
		(a_1,\alpha_1),&\dots,&(a_m,\alpha_m)\\\\
		(b_1,\beta_1),&\dots,&(b_l,\beta_l)
	\end{matrix}\bigg |x\Bigg]\coloneqq\sum_{r=0}^{\infty}\frac{\prod_{i=1}^{m}\Gamma(\alpha_i r+a_i)}{\prod_{j=1}^{l}\Gamma(\beta_j r+b_j)r!}x^r,\ x\in\mathbb{R},
\end{equation}
for $a_i, b_j\in\mathbb{R}$ and $\alpha_i, \beta_j\in\mathbb{R}-\{0\}$ for $1\leq i\leq m$, $1\leq j\leq l$.
Also, various other distributional properties of the FGPRF are derived.

In Section \ref{sec5}, we define a spatial Skellam type point process using the GPRF. Let $\mathcal{I}\subset\mathbb{R}-\{0\}$ be a finite subset, and let $\{M_i(A),\ A\in\mathcal{A}_d\}$, $i\in\mathcal{I}$ be independent GPRFs. We introduce an integer valued point process,
\begin{equation*}
	S(A)\coloneqq\sum_{i\in\mathcal{I}}iM_i(A),\ A\in\mathcal{A}_d.
\end{equation*}
We call $\{S(A),\ A\in\mathcal{A}_d\}$, the generalized Skellam Point process (GSPP). A compound Poisson field representation for the GSPP is obtained. For $\mathcal{I}=\{1,-1\}$, its point probabilities are derived explicitly. We also derive a finite dimensional weak approximation of the GSPP on $\mathbb{R}^2_+$. Later, a fractional variant of the GSPP is introduced and studied.
\section{Generalized Poisson random field}\label{sec2}
Here, we define a generalization of the Poisson random measure on finite dimensional Euclidean space. Let $\mathcal{A}_d=\{[\textbf{0},\textbf{x}],\ \textbf{x}\in\mathbb{R}^d_+\}$, $d\ge1$ be the collection of rectangles in $\mathbb{R}^d_+$, where $[\textbf{0},\textbf{x}]=\prod_{j=1}^{d}[0,x_j]$ for each $\textbf{x}=(x_1,\dots,x_d)\in\mathbb{R}^d_+$. 
\begin{definition} \label{gprfdef}
	Let $\{M(A),\ A\in\mathcal{A}_d\}$ be a non-negative integer valued random measure. We call it the generalized Poisson random field (GPRF) if\\
	\noindent (i) there exist positive constants $\lambda_1,\lambda_2,\dots,\lambda_k$ for a fix $k\ge1$ such that for any $A\in\mathcal{A}_d$ with $|A|<\infty$, the distribution of $M(A)$ is given by
	\begin{equation}\label{GPRFdist}
		\mathrm{Pr}\{M(A)=n\}=\sum_{\Theta(k,n)}\prod_{j=1}^{k}\frac{(\lambda_j|A|)^{n_j}}{n_j!}e^{-\lambda_j|A|},\ n\ge0,
	\end{equation}
	where $\Theta(k,n)=\{(n_1,n_2,\dots,n_k)\in\mathbb{N}^k_0:\sum_{j=1}^{k}jn_j=n\}$, and $|A|$ denotes the Lebesgue measure of $A$;\\
	\noindent (ii) For any collection $\{A_1,A_2,\dots,A_m\}\subset\mathcal{A}_d$ of disjoint sets,  $M(A_1),\dots, M(A_m)$, $m\ge2$ are independent of each other.
\end{definition} 
 We denote the GPRF by $\{M(A),\ A\in\mathcal{A}_d\}\sim GPRF\{\lambda_j\}_{1\leq j\leq k}$. Note that for $k=1$, GPRF reduces to the PRF as defined in Section \ref{sec1}. For $d=1$, it reduces to a generalized counting process on the positive real line, introduced and studied in \cite{Crescenzo2016}. In this case, it is equal in distribution to a generalized Skellam process in the sense of \cite{Cinque2025}. Note that (\ref{GPRFdist}) is a valid probability mass function, that is, it is non-negative and sums to one. The probability generating function $G(z,A)\coloneqq\mathbb{E}u^{M(A)}$, $|z|\leq1$ of GPRF is 
	\begin{equation}\label{GPRFpgf}
		G(z,A)=\sum_{n=0}^{\infty}z^n\mathrm{Pr}\{M(A)=n\}=\exp\bigg(\sum_{j=1}^{k}\lambda_j|A|(z^j-1)\bigg),\ A\in\mathcal{A}_d,\,|z|\leq1.
	\end{equation}
	Its mean and variance are given by $\mathbb{E}M(A)=\sum_{j=1}^{k}j\lambda_j|A|$ and $\mathbb{V}\mathrm{ar}M(A)=\sum_{j=1}^{k}j^2\lambda_j|A|$, respectively. For any $A,B\in\mathcal{A}_d$, its auto-covariance is $\mathbb{C}\mathrm{ov}(M(A),M(B))=\sum_{j=1}^{k}j^2\lambda_j|A\cap B|$.
\begin{remark}\label{rem21}
	In \cite{Baddeley2007}, the capacity functional of a point process $\{N(A),\ A\in\mathcal{A}_d\}$ is defined as $T_N(K)=1-\mathrm{Pr}\{N(K)=0\}$ for some compact set $K\subset\mathbb{R}^2_+$. It is noted that a point process can be characterized via its capacity functional. The capacity functional of GPRF is given by 
	$
		T_M(K)\coloneqq 1-e^{-(\lambda_1+\dots+\lambda_k)|K|}$.
\end{remark}
\begin{remark}
	 Let $\{M(A),\ A\in\mathcal{A}_d\}$  be a GPRF. Then, the variable $M(A)$ denotes the random number of points inside the rectangle $A\in\mathcal{A}_d$. Hence, similar to the PRF, its infinitesimal probabilities characterization is as follows: For any $A\in\mathcal{A}_d$ with infinitesimal Lebesgue measure, that is, $o(|A|)/|A|\to0$ as $|A|\to0$, the probability of exactly $j\in\{1,2,\dots,k\}$ many points in $A$ is given by $\mathrm{Pr}\{N(A)=j\}=\lambda_j|A|+o(|A|)$, the probability of no point in $A$ is $\mathrm{Pr}\{N(A)=0\}=1-\sum_{j=1}^{k}\lambda_j|A|+o(|A|)$, and the probability of more than $k$ points in $A$ is equal to $o(|A|)$. 
\end{remark}

Next result provides a unique representation for a particular type of GPRF.
\begin{theorem}\label{thmrep}
	Let $\lambda_1,\dots,\lambda_k$ be positive constants. Let $\{N_1(A),\ A\in\mathcal{A}_d\}, \dots, \{N_k(A),\ A\in\mathcal{A}_d\}$ be $k$-many mutually independent non-negative integer valued random measures. Then, the random measure $\{M(A),\ A\in\mathcal{A}_d\}=\{\sum_{j=1}^{k'}N_j(A),\ A\in\mathcal{A}_d\}\sim GPRF\{\lambda_j\}_{1\leq j\leq k'}$ for all $k'\leq k$ if and only if $\{N_1(A),\ A\in\mathcal{A}_d\},\dots, \{N_{k}(A),\ A\in\mathcal{A}_d\}$ are mutually independent PRFs with rate parameters $\lambda_1$, $\lambda_2$, $\dots$, $\lambda_k$, respectively.
\end{theorem}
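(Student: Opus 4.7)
The plan is to prove the two implications separately, with the forward direction (independence of $N_j$'s implies $M$ is a GPRF) being straightforward via probability generating functions, and the reverse direction proceeding by induction on $k'$.

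For sufficiency, assume each $\{N_j(A)\}$ is a PRF with rate $\lambda_j$. I would compute the pgf of the weighted sum using independence across $j$:
\begin{equation*}
\mathbb{E}\bigl[z^{M(A)}\bigr] \;=\; \prod_{j=1}^{k'} \mathbb{E}\bigl[z^{j N_j(A)}\bigr] \;=\; \prod_{j=1}^{k'} \exp\!\bigl(\lambda_j|A|(z^j-1)\bigr) \;=\; \exp\!\Bigl(\sum_{j=1}^{k'}\lambda_j|A|(z^j-1)\Bigr),
\end{equation*}
which coincides with the GPRF pgf in (\ref{GPRFpgf}). For disjoint $A_1,\dots,A_m$, independence of $M(A_1),\dots,M(A_m)$ follows from combining the independence of each PRF across disjoint rectangles with the across-$j$ independence of the family $\{N_j\}$. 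Hence $M\sim GPRF\{\lambda_j\}_{1\le j\le k'}$ for every $k'\le k$.

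For necessity, the strategy is induction on $k'$. The base case $k'=1$ is immediate, since $GPRF\{\lambda_1\}$ is the same as $PRF(\lambda_1)$ by Definition \ref{gprfdef}, so $N_1$ is a PRF with rate $\lambda_1$. For the inductive step, assume $N_1,\dots,N_{k'-1}$ have been identified as independent PRFs with rates $\lambda_1,\dots,\lambda_{k'-1}$. The hypothesis at level $k'$ asserts that $\sum_{j=1}^{k'} j N_j(A)$ has the GPRF pgf; factoring out the known PRF contributions via the cross-$j$ independence yields
\begin{equation*}
\mathbb{E}\bigl[z^{k' N_{k'}(A)}\bigr] \;=\; \exp\!\bigl(\lambda_{k'}|A|(z^{k'}-1)\bigr), \qquad |z|\le 1.
\end{equation*}
Substituting $u=z^{k'}$ (so that $u$ ranges over $[0,1]$) uniquely determines the distribution of $N_{k'}(A)$ as Poisson$(\lambda_{k'}|A|)$.

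To upgrade this marginal identification to the full PRF property, I would repeat the factorization for the bivariate (and more generally multivariate) pgf over disjoint rectangles $A_1,\dots,A_m$: compute $\mathbb{E}[\prod_i z_i^{M(A_i)}]$ in two ways, using on one hand the GPRF independence across disjoint sets, and on the other the across-$j$ independence combined with the already-established PRF structure of $N_1,\dots,N_{k'-1}$. Cancelling the common factors forces the joint pgf of $\bigl(N_{k'}(A_1),\dots,N_{k'}(A_m)\bigr)$ to factor, yielding the required independence. The main obstacle I anticipate is precisely this step: one must be careful that the substitution $u_i=z_i^{k'}$ still produces a pgf identity valid on a domain rich enough to determine joint distributions (the unit polydisc suffices), and that the division of pgfs is legitimate because the denominators are strictly positive on $[0,1]$. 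Once this is handled cleanly, the induction closes and the proof is complete.
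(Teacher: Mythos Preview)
Your proof is correct and follows essentially the same route as the paper: the forward direction via the pgf factorization is identical, and the converse is handled by the same induction on $k'$, peeling off the top term by dividing pgfs. Your treatment of the independence step is in fact more careful than the paper's---you use the multivariate pgf $\mathbb{E}\bigl[\prod_i z_i^{M(A_i)}\bigr]$ and note the substitution/domain issue, whereas the paper only equates the univariate pgf of $\sum_i N_{k'}(A_i)$, which by itself does not literally force independence.
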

\begin{proof}
	For every $j=1,2,\dots,k'\leq k$, let $\{N_j(A),\ A\in\mathcal{A}_d\}$ be a PRF with intensities $\lambda_j>0$ and set $M(A)=\sum_{j=1}^{k'}jN_j(A)$, $A\in\mathcal{A}_d$. Then, for any collection of disjoint sets $\{A_1,A_2,\dots,A_m\}\subset\mathcal{A}_d$, independence of $M(A_1)$, $M(A_2)$, $\dots$, $M(A_m)$ follows from the independence of $N_j(A)$'s. Further, the pgf $\mathbb{E}z^{\sum_{j=1}^{k'}jN_j(A)}=\exp(\sum_{j=1}^{k'}\lambda_j|A|(z^j-1))$ coincides with (\ref{GPRFpgf})	
	where we have used $\mathbb{E}z^{N_j(A)}=\exp(\lambda_j |A|(z-1))$, the pgf of PRF. Thus, the random measure $\{M(A),\ A\in\mathcal{A}_d\}$ is a GPRF.
	
	For the converse part, we use induction on the values of $k$. If $\{M(A),\ A\in\mathcal{A}_d\}$ is a GPRF then for $k=1$, it reduces to a PRF with some fixed positive intensity. Suppose the result holds for some $k=m\ge2$, that is, for a GPRF $\{M(A),\ A\in\mathcal{A}_d\}\sim GPRF\{\lambda_j\}_{1\leq j\leq m}$ there exist independent PRFs $\{N_j(A),\ A\in\mathcal{A}_d\}$, $j=1,2,\dots,m$ with intensity $\lambda_j$ such that $M(A)=\sum_{j=1}^{m}jN_j(A)$ for all $A\in\mathcal{A}_d$. Now, let $\{N_{m+1}(A),\ A\in\mathcal{A}_d\}$ be a point process independent of each PRF $\{N_j(A),\ A\in\mathcal{A}_d\}$ for $j=1,2,\dots,m$ such that $\tilde{M}(A)=\sum_{j=1}^{m}jN_j(A)+(m+1)N_{m+1}(A)$, $A\in\mathcal{A}_d$ where $\{\tilde{M}(A),\ A\in\mathcal{A}_d\}\sim GPRF\{\lambda_j\}_{1\leq j\leq m+1}$. Then, we need to show that $\{N_{m+1}(A),\ A\in\mathcal{A}_d\}$ is the PRF with intensity $\lambda_{m+1}>0$. By using (\ref{GPRFpgf}) and the induction hypothesis, the pgf of $\sum_{j=1}^{m+1}jN_j(A)$ is given by 
	\begin{equation*}
		\mathbb{E}z^{\sum_{j=1}^{m+1}jN_j(A)}=\exp\bigg(\sum_{j}^{m}\lambda_j|A|(z^j-1)\bigg)\mathbb{E}z^{(m+1)N_{m+1}(A)}.
	\end{equation*}
	On equating it with the pgf of $\tilde{M}(A)$, we get $\mathbb{E}z^{N_{m+1}(A)}=\exp(\lambda_{m+1}|A|(z-1))$, $|z|\leq1$. So, $N_{m+1}(A)$ has Poisson distribution with mean $\lambda_{m+1}|A|$. Moreover, for any disjoint rectangles $A_1$, $A_2,\dots,A_l\in\mathcal{A}_d$, by equating the pgf of $\sum_{i=1}^{l}\tilde{M}(A_i)$ to the pgf of $\sum_{i=1}^{l}\sum_{j=1}^{m+1}N_j(A_i)$, we get 
	\begin{equation*}
		\mathbb{E}z^{\sum_{i=1}^{l}N_{m+1}(A_i)}=\prod_{i=1}^{l}\exp(\lambda_{m+1}|A_i|(z-1))=\prod_{i=1}^{l}\mathbb{E}z^{N_{m+1}(A_i)},\ |z|\leq1.
	\end{equation*}
	Thus, the random variables $N_{m+1}(A_1)$, $N_{m+1}(A_2)$, $\dots$, $N_{m+1}(A_l)$ are independent of each other. This completes the proof.
\end{proof}
\begin{remark}\label{rem23}
	From Theorem \ref{thmrep}, we note that for any $\{M(A),\ A\in\mathcal{A}_d\}\sim GPRF\{\lambda_j\}_{1\leq j\leq k}$ there exist $k$-many independent Poisson random fields $\{N_1(A),\ A\in\mathcal{A}_d\}$, $\{N_2(A),\ A\in\mathcal{A}_d\}$, $\dots$, $\{N_k(A),\ A\in\mathcal{A}_d\}$ with transition intensities $\lambda_1$, $\lambda_2$, $\dots$, $\lambda_k$, respectively, such that 
$
		M(A)\overset{d}{=}\sum_{j=1}^{k}jN_j(A)$, $ A\in\mathcal{A}_d,
$
	where $\overset{d}{=}$ denotes the equality in distribution. Indeed, $\sum_{j=1}^{k}jN_j(A)$ is a generalized Skellam random field in the sense of \cite{Vishwakarma2025b}. Hence, from Proposition 3.1 of  \cite{Vishwakarma2025b}, its one dimensional distribution is equal to that of a compound Poisson random field, studied in \cite{Vishwakarma2025c}. In particular, if $\{X_r\}_{r\ge1}$ are iid random variables such that $\mathrm{Pr}\{X_1=j\}=\lambda_j/\sum_{j=1}^{k}\lambda_j$ for $j=1,\dots,k$, then the GPRF satisfies, $M(A)\overset{d}{=}\sum_{r=1}^{N(A)}X_r$, where $\{N(A),\ A\in\mathcal{A}_d\}$ is a PRF with rate parameter $\lambda_1+\dots+\lambda_k$, and it is independent of $\{X_r\}_{r\ge1}$.
\end{remark}
\subsection{Thinning of a GPRF} It is well known that a Poisson process on the positive real line can be thinned into independent Poisson processes with reduced rates by using a sequence of independent Bernoulli trials. A similar type of thinning for a generalized counting process with the possibilities of multiple arrivals (GPRF with $d=1$) is investigated in \cite{Dhillon2024}. Here, we study the thinning of a GPRF via independent Bernoulli trials. First, we discuss the thinning of a PRF. Let $\{Z_r\}_{r\ge1}$ be a sequence of independent and identically distributed (iid) Bernoulli random variables with success probability $p\in(0,1)$. Let $\{N(A),\ A\in\mathcal{A}_d\}$ be a PRF with rate parameter $\lambda>0$ as defined in Section \ref{sec1}, which is independent of $\{Z_r\}_{r\ge1}$. Let us now consider a random field defined as follows:
\begin{equation}\label{thndef1}
	S_N(A)\coloneqq\sum_{r=1}^{N(A)}Z_r,\ A\in\mathcal{A}_d.
\end{equation}
Note that $\{S_N(A),\ A\in\mathcal{A}_d\}$ is a compound Poisson random field as defined in \cite{Vishwakarma2025c}.
\begin{proposition}\label{prop21}
	Let $\{N(A),\ A\in\mathcal{A}_d\}$ be a PRF with rate parameter $\lambda>0$, and let $\{S_N(A),\ A\in\mathcal{A}_d\}$ be as defined in (\ref{thndef1}). Then, $\{S_N(A),\ A\in\mathcal{A}_d\}$ and $\{N(A)-S_N(A),\ A\in\mathcal{A}_d\}$ are PRFs with rate parameters $\lambda p$ and $\lambda(1-p)$, respectively. Moreover, $S_N(A)$ and $N(A)-S_N(A)$ are independent for all $A\in\mathcal{A}_d$.
\end{proposition}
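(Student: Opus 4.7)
The plan is to establish the distributional claim at a fixed rectangle $A$ via the joint probability generating function, and then to upgrade to the random field property using the independent increments of the underlying PRF.

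For a fixed $A \in \mathcal{A}_d$ with $|A| < \infty$, I would compute the joint pgf
$$H(u,v) \coloneqq \mathbb{E}\bigl[u^{S_N(A)} v^{N(A) - S_N(A)}\bigr], \qquad |u|, |v| \leq 1.$$
Conditioning on $N(A) = n$, the variable $S_N(A)$ is a sum of $n$ iid Bernoulli$(p)$ random variables, hence binomial$(n,p)$, so $\mathbb{E}\bigl[u^{S_N(A)} v^{N(A)-S_N(A)} \mid N(A) = n\bigr] = (pu + (1-p)v)^n$. Averaging over $N(A) \sim \mathrm{Poisson}(\lambda|A|)$ gives
$$H(u,v) = \exp\bigl(\lambda|A|(pu + (1-p)v - 1)\bigr) = \exp\bigl(\lambda p|A|(u-1)\bigr)\cdot\exp\bigl(\lambda(1-p)|A|(v-1)\bigr).$$
Setting alternately $v = 1$ and $u = 1$ identifies the marginals: $S_N(A)$ is Poisson with mean $\lambda p|A|$ and $N(A) - S_N(A)$ is Poisson with mean $\lambda(1-p)|A|$. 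The factorization of $H(u,v)$ as a product of two univariate pgfs simultaneously yields the asserted independence of $S_N(A)$ and $N(A) - S_N(A)$.

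For the random field structure, let $A_1, \ldots, A_m \in \mathcal{A}_d$ be disjoint. I would interpret the thinning via independent Bernoulli marks attached to each point of the PRF, so that disjoint rectangles draw from disjoint (hence independent) subfamilies of the iid Bernoulli sequence. Combined with the independence of $N(A_1),\ldots,N(A_m)$ from the PRF property, the joint pgf of the $2m$-tuple $(S_N(A_1), N(A_1) - S_N(A_1), \ldots, S_N(A_m), N(A_m) - S_N(A_m))$ factors over $i$, and each factor reduces to the form computed above. This gives both the mutual independence of $\{S_N(A_i)\}$ and of $\{N(A_i) - S_N(A_i)\}$ across $i$, completing the verification of the two defining properties of a PRF for each of the two fields.

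The main obstacle is the bookkeeping in the second step: a literal reading of (\ref{thndef1}) reuses the sequence $\{Z_r\}_{r \geq 1}$ across all rectangles, which would manufacture spurious correlations between $S_N(A_1)$ and $S_N(A_2)$ through shared initial indices. The argument only goes through if one adopts the standard point-by-point marking viewpoint, under which contributions from disjoint sets are drawn from independent Bernoulli subfamilies. Once this interpretation is fixed, the rest is a routine pgf computation, with the explicit rates $\lambda p$ and $\lambda(1-p)$ falling out of the exponential factorization.
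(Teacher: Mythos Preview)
Your proposal is correct and follows essentially the same line as the paper's proof: both condition on $N(A)$, compute a joint transform (you use the pgf, the paper the characteristic function), and read off the Poisson marginals and the independence from its factorization; for disjoint rectangles both appeal to the point-marking interpretation so that distinct sets draw from independent Bernoulli subfamilies. Your explicit remark on the indexing issue in (\ref{thndef1}) is exactly what the paper handles informally by saying the $Z_r$'s are ``indexed by the points of disjoint sets.''
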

\begin{proof}
	By a simple calculation, it can be shown that the characteristic function of $S_N(A)$ is given by
	$\mathbb{E}e^{iuS_N(A)}=\exp\big(\lambda p|A|(e^{iu}-1)\big)$, $u\in\mathbb{R}$. Hence, it is a Poisson variable with mean $\lambda p|A|$ for each $A\in\mathcal{A}_d$. Note that for any disjoint sets $A_1,\dots,A_n\in\mathcal{A}_d$, the random variables $Z_r$'s involved in $S_N(A_1),\dots,S_N(A_n)$ are indexed by the points of disjoint sets. Thus, these are independent using the independence of $Z_r$'s and $N(A_j)$'s. Therefore, $\{S_N(A),\ A\in\mathcal{A}_d\}$ is a PRF with rate parameter $\lambda p$. Similar arguments implies that $N(A)-S_N(A)$ is a PRF with rate parameter $\lambda(1-p)$. Moreover, for any $A\in\mathcal{A}_d$, the joint characteristic function of  $S_N(A)$ and $N(A)-S_N(A)$ is given by 
	\begin{align*}
		\mathbb{E}\exp\big(iuS_N(A)+iv(N(A)-S_N(A))\big)&=\mathbb{E}\bigg(\mathbb{E}\exp\big(iu\sum_{r=1}^{n}Z_r+iv(n-\sum_{r=1}^{n}Z_r)|N(A)=n\big)\bigg)\\
		&=\mathbb{E}e^{ivN(A)}\big(e^{i(u-v)}p+1-p\big)^{N(A)}\\
		&=\exp\big(\lambda|A|\big(e^{iu}p+(1-p)e^{iv}-1\big)\big)\\
		&=e^{\lambda|A|p(e^{iu}-1)}e^{\lambda|A|(1-p)(e^{iv}-1)},\ u,v\in\mathbb{R}.
	\end{align*} 
	This completes the proof.
\end{proof}

The following result extends the thinning of a PRF into more than two components in the obvious manner. 
\begin{proposition}
	Let $\{Z_r=(Z_r^{(1)},\dots, Z_r^{(k)})\}_{r\ge1}$, $k\ge1$ be a sequence of iid random vectors that is independent of the PRF $\{N(A),\ A\in\mathcal{A}_d\}$. Suppose $Z_r^{(1)}+\dots+Z_r^{(k)}=1$ for all $r\ge1$. Let us defined $S_N^{(j)}(A)\coloneqq\sum_{r=1}^{N(A)}Z_r^{(j)}$ for every $1\leq j\leq k$. Then, $\{S_N^{(j)}(A),\ A\in\mathcal{A}_d\}$'s are PRFs with some reduced rate parameters. Additionally, $S_N^{(1)}(A),\dots,S_N^{(k)}(A)$ are mutually independent for every $A\in\mathcal{A}_d$.
\end{proposition}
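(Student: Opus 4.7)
The strategy mirrors the bivariate computation in Proposition \ref{prop21}, but carried out for the joint characteristic function of all $k$ thinned components simultaneously. First, I would set $p_j\coloneqq\mathrm{Pr}\{Z_1^{(j)}=1\}$ for $j=1,\dots,k$. Since the $S_N^{(j)}$'s must be non-negative integer valued (they are thinned counts of a PRF) and the coordinates of $Z_r$ are forced to be non-negative with $\sum_{j=1}^{k}Z_r^{(j)}=1$, exactly one coordinate equals $1$ and the rest vanish, giving $\sum_{j=1}^{k}p_j=1$. The claim will be that $\{S_N^{(j)}(A),\ A\in\mathcal{A}_d\}$ is a PRF of rate $\lambda p_j$.

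The main computation is the joint characteristic function at a fixed $A\in\mathcal{A}_d$. Conditioning on $N(A)$ and using the independence of $\{Z_r\}_{r\ge 1}$ from $N$, I obtain
\begin{equation*}
\mathbb{E}\exp\Big(i\sum_{j=1}^{k}u_jS_N^{(j)}(A)\Big)=\mathbb{E}\Big(\mathbb{E}\exp\big(i\sum_{j=1}^{k}u_jZ_1^{(j)}\big)\Big)^{N(A)}=\mathbb{E}\Big(\sum_{j=1}^{k}p_je^{iu_j}\Big)^{N(A)}.
\end{equation*}
Plugging in the Poisson pgf $\mathbb{E}z^{N(A)}=e^{\lambda|A|(z-1)}$ and using $\sum_{j=1}^{k}p_j=1$, this collapses to
\begin{equation*}
\exp\Big(\lambda|A|\sum_{j=1}^{k}p_j(e^{iu_j}-1)\Big)=\prod_{j=1}^{k}\exp\big(\lambda p_j|A|(e^{iu_j}-1)\big).
\end{equation*}
This single factorization simultaneously identifies the marginal distribution of $S_N^{(j)}(A)$ as Poisson with mean $\lambda p_j|A|$ and establishes the mutual independence of $S_N^{(1)}(A),\dots,S_N^{(k)}(A)$.

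To upgrade each $\{S_N^{(j)}(A),\ A\in\mathcal{A}_d\}$ to a PRF, it remains to check independence on disjoint rectangles. For $A_1,\dots,A_m\in\mathcal{A}_d$ pairwise disjoint, the variables $S_N^{(j)}(A_1),\dots,S_N^{(j)}(A_m)$ are built from disjoint sub-collections of the iid sequence $\{Z_r\}_{r\ge 1}$, paired with the increments $N(A_1),\dots,N(A_m)$, which are themselves independent by the PRF hypothesis. The same reasoning used at the end of the proof of Proposition \ref{prop21} then yields independence of $S_N^{(j)}(A_1),\dots,S_N^{(j)}(A_m)$ for each $j$.

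The proof contains no new idea beyond the bivariate case; the only place that needs care is the implicit structural assertion that $Z_r^{(j)}\in\{0,1\}$, which must be extracted from the constraints (non-negative integer-valued thinning with sum $1$) before the characteristic-function computation can be written as above. Once that is in place, the rest is a direct generalization.
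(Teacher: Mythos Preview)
Your approach is correct and is exactly the ``obvious'' extension of Proposition \ref{prop21} that the paper intends; in fact the paper does not supply a separate proof of this proposition at all, presenting it only as the natural $k$-component generalization. Your joint characteristic function computation, its factorization via $\sum_j p_j=1$, and the disjoint-rectangle independence argument reproduce precisely the line of reasoning used for the two-component case, and your remark that the constraint $Z_r^{(j)}\in\{0,1\}$ must first be extracted from the hypotheses is a point the paper leaves implicit.
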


In view of Theorem \ref{thmrep}, the following result provides the thinning of a GPRF.
\begin{proposition}\label{prop23}
	Let $\{N_1(A),\ A\in\mathcal{A}_d\},\dots, \{N_k(A),\ A\in\mathcal{A}_d\}$ be independent PRFs with transition intensities $\lambda_1, \dots, \lambda_k$, respectively, and $\{M(A),\ A\in\mathcal{A}_d\}\sim GPRF\{\lambda_{j}\}_{1\leq j\leq k}$ be a GPRF, defined as $M(A)=\sum_{j=1}^{k}jN_j(A)$, $A\in\mathcal{A}_d$. Let $\{Z_r^{(j)}\}_{r\ge1}$, $j=1,\dots,k$ be independent sequences of iid random variables such that $\mathrm{Pr}\{Z_1^{(j)}=j\}=p_j$ and $\mathrm{Pr}\{Z_1^{(j)}=0\}=1-p_j$, where $p_j\in(0,1)$ for each $j=1,\dots,k$. Additionally, we assumed that $\{Z_r^{(j)}\}_{r\ge1}$'s are independent of $\{M(A),\ A\in\mathcal{A}_d\}$.  Let us define a random field,
$S_M(A)\coloneqq\sum_{j=1}^{k}\sum_{r=1}^{N_j(A)}Z_r^{(j)}$, $A\in\mathcal{A}_d$. Then, $\{S_M(A),\ A\in\mathcal{A}_d\}\sim GPRF\{\lambda_jp_j\}_{1\leq j\leq k}$ and $\{M(A)-S_M(A),\ A\in\mathcal{A}_d\}\sim\{\lambda_j(1-p_j)\}_{1\leq k\leq j}$. Moreover, $S_M(A)$ and $M(A)-S_M(A)$ are independent for each $A\in\mathcal{A}_d$. 
\end{proposition}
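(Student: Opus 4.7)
The plan is to reduce the statement to the single-PRF thinning already established in Proposition \ref{prop21} and then assemble the pieces using the representation theorem, Theorem \ref{thmrep}.

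First I would rewrite the random field in a form that separates the contribution from each component PRF. For every $1\le j\le k$, set $S_N^{(j)}(A)\coloneqq \sum_{r=1}^{N_j(A)}\mathbb{1}\{Z_r^{(j)}=j\}$, so that $\sum_{r=1}^{N_j(A)}Z_r^{(j)}=j\,S_N^{(j)}(A)$, and hence
\begin{equation*}
S_M(A)=\sum_{j=1}^{k}j\,S_N^{(j)}(A),\qquad M(A)-S_M(A)=\sum_{j=1}^{k}j\bigl(N_j(A)-S_N^{(j)}(A)\bigr).
\end{equation*}
Since $\mathbb{1}\{Z_r^{(j)}=j\}$ is Bernoulli with success probability $p_j$ and is independent of $N_j$, Proposition \ref{prop21} applied to each PRF $\{N_j(A),\ A\in\mathcal{A}_d\}$ individually yields that $\{S_N^{(j)}(A),\ A\in\mathcal{A}_d\}$ and $\{N_j(A)-S_N^{(j)}(A),\ A\in\mathcal{A}_d\}$ are PRFs with rate parameters $\lambda_j p_j$ and $\lambda_j(1-p_j)$, respectively, and that $S_N^{(j)}(A)$ is independent of $N_j(A)-S_N^{(j)}(A)$ for every $A\in\mathcal{A}_d$.

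Next I would verify the cross-$j$ independence needed to invoke Theorem \ref{thmrep}. Because the PRFs $\{N_j\}_{j=1}^{k}$ are mutually independent and the Bernoulli sequences $\{Z_r^{(j)}\}_{r\ge1}$ are independent across $j$, the random fields $\{S_N^{(j)}(A),\ A\in\mathcal{A}_d\}$, $j=1,\dots,k$, are mutually independent PRFs with intensities $\lambda_j p_j$, and similarly $\{N_j(A)-S_N^{(j)}(A),\ A\in\mathcal{A}_d\}$, $j=1,\dots,k$, are mutually independent PRFs with intensities $\lambda_j(1-p_j)$. Then the converse part of Theorem \ref{thmrep} gives $\{S_M(A),\ A\in\mathcal{A}_d\}\sim GPRF\{\lambda_j p_j\}_{1\le j\le k}$ and $\{M(A)-S_M(A),\ A\in\mathcal{A}_d\}\sim GPRF\{\lambda_j(1-p_j)\}_{1\le j\le k}$.

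Finally, for the independence of $S_M(A)$ and $M(A)-S_M(A)$, I would observe that for each $j$ the pair $(S_N^{(j)}(A),\,N_j(A)-S_N^{(j)}(A))$ has independent coordinates by Proposition \ref{prop21}, and these $k$ pairs are mutually independent by the independence of $(N_j,Z^{(j)})$ across $j$. Consequently, the vector $(S_N^{(1)}(A),\dots,S_N^{(k)}(A))$ is independent of $(N_1(A)-S_N^{(1)}(A),\dots,N_k(A)-S_N^{(k)}(A))$, which transfers to independence of their linear combinations $S_M(A)$ and $M(A)-S_M(A)$. The main technical care is to keep the Bernoulli sequences indexed component-wise so that the independence structure used to apply Proposition \ref{prop21} componentwise, and then Theorem \ref{thmrep} jointly, really is available; once this bookkeeping is handled, the argument is essentially a packaging of earlier results.
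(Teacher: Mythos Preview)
Your proposal is correct and follows essentially the same route as the paper: rewrite $Z_r^{(j)}=j\cdot\mathbb{1}\{Z_r^{(j)}=j\}$, apply Proposition~\ref{prop21} componentwise to each $N_j$ to obtain the thinned PRFs with rates $\lambda_jp_j$ and $\lambda_j(1-p_j)$, and then invoke the weighted-sum representation (Theorem~\ref{thmrep}/Remark~\ref{rem23}) together with cross-$j$ independence to assemble the two GPRFs and their independence. One terminological slip: the direction of Theorem~\ref{thmrep} you need is the \emph{forward} (``if'') implication---independent PRFs $\Rightarrow$ GPRF---not the converse.
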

\begin{proof}
	For disjoint rectangles $A_1,\dots,A_m\in\mathcal{A}_d$, the independence of $S_M(A_1),\dots,S_M(A_m)$ follows from the independence of $\sum_{r=1}^{N_j(A_1)}Z_r^{(j)},\dots,\sum_{r=1}^{N_j(A_m)}Z_r^{(j)}$ for each $j=1,\dots,k$, and the independence of $N_j$'s and $Z_r^{(j)}$'s. Let $\{B_1^{(j)}\}_{r\ge1}$, $j=1,\dots,k$  be independent sequences of iid Bernoulli variables with success probability $p_j\in(0,1)$. Then, we note that $Z_1^{(j)}\overset{d}{=}jB_1^{(j)}$, and from Proposition \ref{prop21}, we have that $\sum_{r=1}^{N_j(A)}B_r^{(j)}$ is a PRF with rate parameter $\lambda_jp_j$ for each $j=1,\dots,k$. So, from Remark \ref{rem23}, the distribution of $S_M(A)$ coincides to that of GPRF.  Thus, $\{S_M(A),\ A\in\mathcal{A}_d\}\sim GPRF\{\lambda_jp_j\}_{1\leq j\leq k}$. Similar arguments gives that $\{M(A)-S_M(A),\ A\in\mathcal{A}_d\}\sim GPRF\{\lambda_j(1-p_j)\}_{1\leq j\leq k}$. For the independence of $S_M(A)$ and $M(A)-S_M(A)$, it is enough to show the independence of $\sum_{r=1}^{N_j(A)}Z_r^{(j)}$ and $jN_j(A)-\sum_{r=1}^{N_j(A)}Z_r^{(j)}$ for each $j=1,\dots,k$, which follows along similar lines to that of Proposition \ref{prop21}. This completes the proof.
\end{proof}

\section{GPRF on $\mathbb{R}^2_+$}\label{gprf2sec}
 We now consider the GPRF on $\mathbb{R}^2_+$ as a two parameter L\'evy process. Let us now recall the definition of a two parameter L\'evy process with rectangular increments. The characterization of such processes is given in \cite{Straf1972}.
 
 A random process $\{X(s,t),\ (s,t)\in\mathbb{R}^2_+\}$, defined as $X(s,t)\coloneqq X((0,s]\times(0,t])$ is called the two parameter L\'evy process if\\
	\noindent (i) $X(0,t)=X(s,0)=0$ almost surely;\\
	\noindent (ii) for every $(s,t)\preceq(s',t')$, the rectangular increment of $X(s,t)$ over a rectangle $(s,s']\times(t,t']\subset\mathbb{R}^2_+$ is defined as
	\begin{equation}\label{rinc}
		\Delta_{s,t}X(s',t')=X((s,s']\times(t,t'])\coloneqq X(s',t')-X(s',t)-X(s,t')+X(s,t).
	\end{equation}
	Moreover, the distribution of increment $\Delta_{s,t}X(s',t')$ is equal to that of $X(s'-s,t'-t)$;\\
	\noindent (iii) for any disjoint rectangles $A_1,\dots,A_m$ in $\mathbb{R}^2_+$ of type $(s,s']\times(t,t']$, the random variables $X(A_1),\dots,X(A_m)$ are mutually independent.

For $d=2$, the GPRF in Definition \ref{gprfdef} reduces to a random fields on $\mathbb{R}^2_+$ whose increments can be defined as (\ref{rinc}). That is, a two parameter process $\{M(s,t),\ (s,t)\in\mathbb{R}^2_+\}$ will be called the two parameter GPRF with rate parameters $\lambda_1,\dots,\lambda_k>0$ if $M(0,t)=M(s,0)=0$ almost surely. For any $(0,0)\preceq(s,t)\prec(s',t')$, the distribution of $\Delta_{s,t}M(s',t')$ is given by 
\begin{equation*}
	\mathrm{Pr}\{\Delta_{s,t}M(s',t')=n\}=\sum_{\Theta(k,n)}\prod_{j=1}^{k}\frac{(\lambda_j(s'-s)(t'-t))^{n_j}}{n_j!}e^{-\lambda_j(s'-s)(t'-t)},\ n\ge0,
\end{equation*}
where $\Theta(k,n)$ is as in Definition \ref{gprfdef}. Thus, the GPRF on $\mathbb{R}^2_+$ is a two parameter L\'evy process with rectangular increment. We denote it by $\{M(s,t),\ (s,t)\in\mathbb{R}^2_+\}\sim GPRF\{\lambda_j\}_{1\leq j\leq k}$.
\begin{remark}\label{rem41}
	From Remark \ref{rem23}, it follows that for every GPRF, $\{M(s,t), (s,t)\in\mathbb{R}^2_+\}\sim GPRF\{\lambda_j\}_{1\leq j\leq k}$, there exist independent PRFs $\{N_j(s,t),\ (s,t)\in\mathbb{R}^2_+\}$, $j=1,\dots,k$ with rate parameter $\lambda_j$'s such that $M(s,t)\overset{d}{=}\sum_{j=1}^{k}jN_j(s,t)$, $(s,t)\in\mathbb{R}^2_+$. Thus, the mean and variance of $M(s,t)$ are given by $\mathbb{E}M(s,t)=\sum_{j=1}^{k}j\lambda_jst$ and $\mathbb{V}\mathrm{ar}M(s,t)=\sum_{j=1}^{k}j^2\lambda_jst$, respectively. For any $(s,t)$ and $(s',t')$ in $\mathbb{R}^2_+$, its auto covariance function is  
	\begin{equation*}
		\mathbb{C}\mathrm{ov}(M(s,t),M(s',t'))=\sum_{j=1}^{k}j^2\lambda_j\min\{s,s'\}\min\{t,t'\}.
	\end{equation*} 
\end{remark}
\begin{remark}
	The one dimensional distribution of GPRF $\{M(s,t),\ (s,t)\in\mathbb{R}^2_+\}$ is given by 
	\begin{equation*}
		p(n,s,t)\coloneqq\mathrm{Pr}\{M(s,t)=n\}=\sum_{\Theta(k,n)}\prod_{j=1}^{k}\frac{(\lambda_jst)^{n_j}}{n_j!}e^{-\lambda_jst},\ n\ge0,
	\end{equation*}
	which solves 
	\begin{equation*}
		\frac{\partial}{\partial s}p(n,s,t)=-\sum_{j=1}^{k}\lambda_jtp(n,s,t)+\sum_{j=1}^{k}\lambda_jtp(n-j,s,t),\ n\ge0,
	\end{equation*}
	with $p(0,0,t)=1$ for all $t\ge0$. 
\end{remark}

	From (\ref{GPRFpgf}), the pgf of $M(s,t)$ is 
	\begin{equation}\label{gprf2pgf}
		G(z,s,t)\coloneqq\mathbb{E}z^{M(s,t)}=\exp\bigg(\sum_{j=1}^{k}\lambda_jst(z^j-1)\bigg),\ |z|\leq 1.
	\end{equation}
	On taking the derivative of (\ref{gprf2pgf}) with respect to $s$, we get
	\begin{equation*}
		\frac{\partial}{\partial s}G(z,s,t)=\sum_{j=1}^{k}\lambda_j(z^j-1)t\exp\bigg(\sum_{j=1}^{k}\lambda_jst(z^j-1)\bigg),
	\end{equation*}
	whose derivative with respect to $t$ yields 
	\begin{align}
		\frac{\partial^2}{\partial t\partial s}G(z,s,t)&=\sum_{j=1}^{k}\lambda_j(z^j-1)G(z,s,t)+\bigg(\sum_{j=1}^{k}\lambda_j(z^j-1)\bigg)^2stG(z,s,t)\nonumber\\
		&=\sum_{j=1}^{k}\lambda_j(z^j-1)G(z,s,t)+\sum_{j=1}^{k}\lambda_j(z^j-1)\bigg(\sum_{j=1}^{k}\lambda_j\sum_{r=1}^{j}z^{j-r}\bigg)(z-1)stG(z,s,t)\nonumber\\
		&=\sum_{j=1}^{k}\lambda_j(z^j-1)G(z,s,t)+\sum_{j=1}^{k}\lambda_j(z^j-1)\bigg(\sum_{j'=1}^{k}\lambda_{j'}\sum_{r=1}^{j'}z^{j'-r}\bigg)\frac{\partial}{\partial\lambda_1}G(z,s,t)\nonumber\\
		&=\sum_{j=1}^{k}\lambda_j(z^j-1)G(z,s,t)+\sum_{j=1}^{k}\sum_{j'=1}^{k}\lambda_j\lambda_{j'}\sum_{r=1}^{j'}(z^{j+j'-r}-z^{j'-r})\frac{\partial}{\partial\lambda_1}G(z,s,t),\label{gprfpgfeq}
	\end{align}
	with initial conditions $G(z,0,t)=G(z,s,0)=G(z,0,0)=1$ for all $s,t\ge0$.

The following result gives a finite dimensional convergence to the two parameter GPRF.
\begin{proposition}
	For $m\ge1$, $l\ge1$ and $n\ge1$, let $p_{m,l,j}^{(n)}\in(0,1)$ for each $j=1,\dots,k$ such that $\sum_{j=1}^{k}p_{m,l,j}^{(n)}<1$. Let $\{X_{m,l}^{(n)}\}_{m,l,n\ge1}$ be a triple indexed sequence of random variables independent across $m$, $l$ and $n$ such that
	\begin{equation*}
		X_{m,l}^{(n)}\coloneqq\begin{cases}
			j\in\{1,\dots,k\},\ \text{with probability}\ p_{m,l,j}^{(n)},\\
			0,\ \text{with probability}\ 1-\sum_{j=1}^{k}p_{m,l,j}^{(n)}.
		\end{cases}
	\end{equation*}
	Let us consider a two parameter process $\{Y^{(n)}(s,t),\ (s,t)\in\mathbb{R}^2_+\}$, $n\ge1$ defined as follows:
	\begin{equation*}
		Y^{(n)}(s,t)\coloneqq\sum_{m=1}^{[ns]}\sum_{l=1}^{[nt]}X_{m,l}^{(n)},\ (s,t)\in\mathbb{R}^2_+.
	\end{equation*}
	If $\max_{1\leq m,l\leq n}p_{m,l,j}^{(n)}\rightarrow0$ and
	\begin{equation*}
		\sum_{m=1}^{[ns]}\sum_{l=1}^{[nt]}p_{m,l,j}^{(n)}\rightarrow \lambda_jst\ \text{for all}\ (s,t)\in\mathbb{R}^2_+\ \text{and}\ j=1,\dots,k,
	\end{equation*}
	for some positive constant $\lambda_1,\dots,\lambda_k$, as $n\rightarrow\infty$, then for any $(0,0)\prec(s_1,t_1)\preceq\dots\preceq(s_r,t_r)$, $r\ge1$, we have
	\begin{equation*}
		(Y^{(n)}(s_1,t_1),\dots, Y^{(n)}(s_r,t_r))\overset{d}{\rightarrow}(M(s_1,t_1),\dots,M(s_1,t_r))\ \text{as}\ n\rightarrow\infty,
	\end{equation*}
	where $\{M(s,t),\ (s,t)\in\mathbb{R}^2_+\}\sim GPRF\{\lambda_j\}_{1\leq j\leq k}$, and $\overset{d}{\rightarrow}$ denotes the convergence in distribution.
\end{proposition}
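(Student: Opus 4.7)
The plan is to establish convergence of all finite-dimensional distributions via pointwise convergence of the joint probability generating function on $[0,1]^{r}$; since the limiting variables are $\mathbb{N}_{0}$-valued, this implies the asserted joint convergence in distribution. Write $\zeta_i\coloneqq z_iz_{i+1}\cdots z_r$ and observe that, because $(s_1,t_1)\preceq\dots\preceq(s_r,t_r)$, the index rectangles $R_i^{(n)}\coloneqq\{1,\dots,[ns_i]\}\times\{1,\dots,[nt_i]\}$ are nested: $R_1^{(n)}\subseteq\dots\subseteq R_r^{(n)}$. Setting $R_0^{(n)}=\emptyset$ and $A_i^{(n)}\coloneqq R_i^{(n)}\setminus R_{i-1}^{(n)}$, each cell $(m,l)\in A_i^{(n)}$ belongs to exactly $R_i^{(n)},\dots,R_r^{(n)}$, so by the independence of the $X_{m,l}^{(n)}$'s,
\begin{equation*}
\mathbb{E}\prod_{i=1}^{r}z_i^{Y^{(n)}(s_i,t_i)}=\prod_{i=1}^{r}\prod_{(m,l)\in A_i^{(n)}}\bigg(1+\sum_{j=1}^{k}p_{m,l,j}^{(n)}(\zeta_i^j-1)\bigg).
\end{equation*}

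I would then take logarithms and Taylor-expand. Because $\max p_{m,l,j}^{(n)}\to 0$, the argument of each logarithm, $\sum_{j}p_{m,l,j}^{(n)}(\zeta_i^j-1)$, is uniformly $o(1)$ in $(m,l,i)$, so that $\log(1+x)=x+O(x^{2})$ applies with a fixed constant. The total squared error is bounded by a constant times $\max_{m,l,j}p_{m,l,j}^{(n)}\cdot\sum_{(m,l)\in R_r^{(n)}}\sum_{j}p_{m,l,j}^{(n)}$; the second factor converges to $\sum_j\lambda_j s_rt_r$ and hence is bounded, while the first factor vanishes. Using the hypothesis $\sum_{R_i^{(n)}}p_{m,l,j}^{(n)}\to\lambda_j s_it_i$ and the telescoping identity $\sum_{A_i^{(n)}}=\sum_{R_i^{(n)}}-\sum_{R_{i-1}^{(n)}}$, I would conclude
\begin{equation*}
\log\mathbb{E}\prod_{i=1}^{r}z_i^{Y^{(n)}(s_i,t_i)}\longrightarrow\sum_{i=1}^{r}(s_it_i-s_{i-1}t_{i-1})\sum_{j=1}^{k}\lambda_j(\zeta_i^j-1),
\end{equation*}
under the convention $s_0t_0=0$.

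It remains to identify this limit as the joint pgf of the target GPRF. Since $\{M(s,t),(s,t)\in\mathbb{R}^2_+\}\sim GPRF\{\lambda_j\}_{1\leq j\leq k}$ is a two-parameter L\'evy process with rectangular increments, the disjoint regions $A_i\coloneqq[0,s_i]\times[0,t_i]\setminus[0,s_{i-1}]\times[0,t_{i-1}]$, each a disjoint union of two rectangles of the form $(a,b]\times(c,d]$ with total Lebesgue measure $s_it_i-s_{i-1}t_{i-1}$, produce mutually independent increments $M(A_i)$ whose pgf, by (\ref{GPRFpgf}) and translation invariance of rectangular increments, is $\exp(|A_i|\sum_{j}\lambda_j(z^j-1))$. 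Writing $M(s_i,t_i)=\sum_{i'\le i}M(A_{i'})$ and collecting the factors of $z_i$ across the nested structure yields exactly the displayed limit, which finishes the proof. The principal technical obstacle is controlling the logarithmic remainder uniformly in $(m,l,n)$ and $i$: this is where the condition $\max p_{m,l,j}^{(n)}\to 0$ enters essentially, while the second hypothesis fixes the main terms of the limit; the rest is a routine continuity argument for pgfs of $\mathbb{N}_0^{\,r}$-valued vectors on $[0,1]^r$.
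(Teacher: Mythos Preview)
Your argument is correct. The factorization of the joint pgf over the annuli $A_i^{(n)}$, the uniform control of the logarithmic remainder via $\max_{m,l,j}p_{m,l,j}^{(n)}\to 0$, the telescoping of the main term, and the identification of the limit with the joint pgf of $(M(s_1,t_1),\dots,M(s_r,t_r))$ through the independent-increments decomposition $M(s_i,t_i)=\sum_{i'\le i}M(A_{i'})$ are all sound. Pointwise convergence of the pgf on $[0,1]^r$ to a function that is itself a bona fide pgf (continuous at $(1,\dots,1)$) yields the desired finite-dimensional convergence.

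The route, however, differs from the paper's. The paper does not carry out any computation: it observes that, by Remark~\ref{rem41}, the GPRF satisfies $M(s,t)\overset{d}{=}\sum_{j=1}^{k}jN_j(s,t)$ with independent PRFs $N_j$, recognizes this as a generalized Skellam random field in the sense of \cite{Vishwakarma2025b}, and then invokes Theorem~3.1 of \cite{Vishwakarma2025b} as a black box. Your proof is essentially a self-contained version of what that external theorem does in this special case (and indeed the later Theorem on the GSPP in Section~\ref{sec5} of the present paper proceeds by exactly the mgf/log-expansion mechanism you use). What you gain is that the argument stands on its own without chasing references; what the paper's approach buys is economy and the observation that the result is a corollary of a more general convergence theorem already established elsewhere.
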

\begin{proof}
	For $\{M(s,t), (s,t)\in\mathbb{R}^2_+\}\sim GPRF\{\lambda_j\}_{1\leq j\leq k}$, there exist $k$-many independent PRFs $\{N_j(s,t),\ (s,t)\in\mathbb{R}^2_+\}$, $j=1,\dots,k$ with rate parameter $\lambda_j$'s such that $M(s,t)\overset{d}{=}\sum_{j=1}^{k}jN_j(s,t)$, $(s,t)\in\mathbb{R}^2_+$. Moreover, $\sum_{j=1}^{k}jN_j(s,t)$ is a generalized Skellam random field on $\mathbb{R}^2_+$ in the sense of \cite{Vishwakarma2025b}. Therefore, the result follows from Theorem 3.1 of \cite{Vishwakarma2025b}. 
\end{proof}
\subsection{Thinning of GPRF on $\mathbb{R}^2_+$} Similar to PRF on $\mathbb{R}^d_+$, we now investigate the thinning of a two parameter PRF into independent PRFs with reduced rate parameters.
\begin{theorem}\label{thm22}
	Let $\{N(s,t),\ (s,t)\in\mathbb{R}^2_+\}$ be a PRF with rate parameter $\lambda>0$. Let $\{Z_r\}_{r\ge1}$ be a sequence of iid Bernoulli random variables with parameter $p\in(0,1)$ which is independent of $\{N(s,t),\ (s,t)\in\mathbb{R}^2_+\}$. Let us consider a two parameter process,
	$S_N(s,t)\coloneqq\sum_{r=1}^{N(s,t)}Z_r$, $(s,t)\in\mathbb{R}^2_+$.
	Then, $\{S_N(s,t),\ (s,t)\in\mathbb{R}^2_+\}$ and $\{N(s,t)-S_N(s,t),\ (s,t)\in\mathbb{R}^2_+\}$ are independent PRFs with rate parameters $\lambda p$ and $\lambda(1-p)$, respectively. 
\end{theorem}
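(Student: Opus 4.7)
The plan is to follow the template of Proposition \ref{prop21} while additionally verifying the three defining properties of a two parameter L\'evy process with rectangular increments recalled at the start of this section. The convenient viewpoint is to attach to each point $P$ of the PRF $\{N(s,t)\}$ an i.i.d.\ Bernoulli$(p)$ mark $Z_P$ and to interpret $S_N(A)$ as the number of marked-$1$ points inside the rectangle $A$; by the exchangeability of the $Z_r$'s, this marked-point-process representation has the same finite-dimensional distributions as the linearly-indexed sum $\sum_{r=1}^{N(s,t)}Z_r$ appearing in the statement, so all calculations may be carried out with the marked version.

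Under this representation, $S_N(0,t)=S_N(s,0)=0$ is immediate from $N(0,t)=N(s,0)=0$. For any rectangle $A=(s,s']\times(t,t']$, the rectangular increment $\Delta_{s,t}S_N(s',t')$ equals the number of marked-$1$ points in $A$; conditioning on $\Delta_{s,t}N(s',t')\sim\mathrm{Poisson}(\lambda(s'-s)(t'-t))$ and computing the characteristic function exactly as in Proposition \ref{prop21} yields
\begin{equation*}
\mathbb{E}e^{iu\Delta_{s,t}S_N(s',t')}=\exp\bigl(\lambda p(s'-s)(t'-t)(e^{iu}-1)\bigr),
\end{equation*}
which matches the law of $S_N(s'-s,t'-t)$ and thus verifies the stationary rectangular increment property. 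For independence of $S_N$ over disjoint rectangles $A_1,\dots,A_m$, observe that the Poisson counts $\Delta N(A_j)$ are mutually independent by the PRF property of $N$, and the Bernoulli marks attached to points in distinct $A_j$'s are drawn from disjoint index subsets of $\{Z_r\}_{r\ge1}$ and hence independent; therefore $\Delta S_N(A_1),\dots,\Delta S_N(A_m)$ are independent. This shows that $\{S_N(s,t)\}$ is a PRF with rate $\lambda p$, and the same argument applied to the complementary marks $1-Z_r$ shows that $\{N(s,t)-S_N(s,t)\}$ is a PRF with rate $\lambda(1-p)$.

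For mutual independence of the two processes, compute the joint characteristic function on a single rectangle $A$ by conditioning on $N(A)$ and expanding $(pe^{iu}+(1-p)e^{iv})^{N(A)}$ exactly as at the end of the proof of Proposition \ref{prop21}; this factors into the product $\exp(\lambda p|A|(e^{iu}-1))\exp(\lambda(1-p)|A|(e^{iv}-1))$, so $(S_N(A),N(A)-S_N(A))$ has independent components. Combining this single-rectangle factorisation with the independence of rectangular increments over disjoint rectangles, and decomposing the rectangles $[0,s]\times[0,t]$ used in the finite-dimensional distributions into disjoint sub-rectangles in the standard way, one obtains mutual independence of the two processes. The only real obstacle is notational, namely justifying that the linearly-indexed sum $\sum_{r=1}^{N(s,t)}Z_r$ behaves correctly under rectangular differencing; the exchangeability-based marked-point reinterpretation given in the first paragraph is precisely what resolves this, after which everything reduces to the one-rectangle arguments already used in Proposition \ref{prop21}.
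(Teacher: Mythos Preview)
Your proof is correct and follows essentially the same route as the paper's: both invoke Proposition~\ref{prop21} for the single-rectangle marginals and joint factorisation, and both use the disjointness of the $Z_r$-indices over disjoint rectangles to pass to the full field. The only differences are that the paper quotes an external compound-PRF result (\cite{Vishwakarma2025c}, Proposition~3.1) in place of your direct verification of the two-parameter L\'evy axioms, and it spells out a four-case decomposition of $\tilde S_N(s',t')$ relative to $S_N(s,t)$ where you gesture at the standard grid decomposition; your explicit exchangeability/marked-point discussion is a useful clarification that the paper leaves implicit.
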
 
\begin{proof}
	Note that $\{S_N(s,t),\ (s,t)\in\mathbb{R}^2_+\}$ and $\{N(s,t)-S_N(s,t),\ (s,t)\in\mathbb{R}^2_+\}$ are two parameter compound Poisson random fields as defined in \cite{Vishwakarma2025c}. From Proposition 3.1 of \cite{Vishwakarma2025c}, these are two parameter L\'evy processes. Thus, from Proposition \ref{prop21}, it follows that $S_N(s,t)$ and $N(s,t)-S_N(s,t)$ are PRFs with rate parameters $\lambda pst$ and $\lambda(1-p)st$, respectively.	
	  To prove the independence of $\{S_N(s,t),\ (s,t)\in\mathbb{R}^2_+\}$ and $\{N(s,t)-S_N(s,t),\ (s,t)\in\mathbb{R}^2_+\}$, we need to show that for any $m,n\ge1$, $(s_1,t_1),\dots,(s_m,t_m)$ and $(s_1',t_1'),\dots,(s_n',t_n')$ in $\mathbb{R}^2_+$, $\{S_N(s_1,t_1),\dots,S_N(s_m,t_m)\}$ and $\{N(s_1',t_1')-S_N(s_1',t_1'),\dots,N(s_n',t_n')-S_N(s_n',t_n')\}$ are independent collections. To show that it is sufficient to prove the independence of $S_N(s,t)$ and $N(s',t')-S_N(s',t')$ for all $(s,t)$ and $(s',t')$ in $\mathbb{R}^2_+$. Note that for any $(s,t)$ and $(s',t')$, we have the following four possible cases:\\
	\noindent (i) $s\leq s'$, $t\leq t'$,\\
	\noindent (ii) $s\leq s'$, $t\ge t'$,\\
	\noindent (iii) $s\ge s'$, $t\leq t'$,\\
	\noindent (iv) $s\ge s'$, $t\ge t'$.
	
	Set $\tilde{S}_N(s,t)=N(s,t)-S_N(s,t)=\sum_{r=1}^{N(s,t)}(1-Z_r)$ for all $(s,t)\in\mathbb{R}^2_+$. For disjoint rectangles $(s_1,s_1']\times(t_1,t_1']$ and $(s_2,s_2']\times(t_2,t_2']$, the random variables $S_N((s_1,s_1']\times(t_1,t_1'])$ and $\tilde{S}_N((s_2,s_2']\times(t_2,t_2'])$ are independent since they only involve $Z_r$'s indexed by distinct points of disjoint rectangles. Also, from Proposition \ref{prop21}, $S_N((s,s']\times(t,t'])$ and $\tilde{S}_N((s,s']\times(t,t'])$ are independent for all $(s,s']\times(t,t']\subset\mathbb{R}^2_+$. If $s\leq s'$, $t\leq t'$, then $\tilde{S}_N(s',t')=\Delta_{s,t}\tilde{S}_N(s',t')+\Delta_{s,0}\tilde{S}_N(s',t)+\Delta_{0,t}\tilde{S}_N(s,t')+\tilde{S}_N(s,t)$, which is a sum of increments independent of $S_N(s,t)$.
	Now, suppose $s\leq s'$ and $t\ge t'$. Then, $S_N(s,t)=\Delta_{0,t'}S_N(s,t)+S_N(s,t')$ and $\tilde{S}_N(s',t')=\Delta_{s,0}\tilde{S}_N(s',t')+\tilde{S}_N(s,t')$, which are independent of each other. The independence for other two cases can be shown along similar lines. This completes the proof.
\end{proof}

The proofs of the following results are similar to that of Theorem \ref{thm22} and Proposition \ref{prop23}. 
\begin{theorem}
For a fix $k\ge1$, let $\{Z_r=(Z_r^{(1)},\dots, Z_r^{(k)})\}_{r\ge1}$ be a sequence of iid $k$-dimensional random vectors with finite mean, that is independent of  a PRF $\{N(s,t),\ (s,t)\in\mathbb{R}^2_+\}$. Suppose $Z_r^{(1)}+\dots+Z_r^{(k)}=1$ for all $r\ge1$, and $S_N^{(j)}(s,t)\coloneqq\sum_{r=1}^{N(s,t)}Z_r^{(j)}$ for every $1\leq j\leq k$. Then, $\{S_N^{(j)}(s,t),\ (s,t)\in\mathbb{R}^2_+\}$, $j=1,\dots,k$ are independent PRFs.
\end{theorem}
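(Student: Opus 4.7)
The plan is to mimic the proof of Theorem \ref{thm22}, exploiting the fact that the constraint $\sum_{j=1}^k Z_r^{(j)}=1$ forces each $Z_r$ to be a random standard basis vector of $\mathbb{R}^k$. Set $p_j\coloneqq\mathrm{Pr}\{Z_1^{(j)}=1\}$ for $1\leq j\leq k$. First I would verify the joint one-point law by a conditional characteristic-function computation: conditioning on $N(s,t)$ and using the iid property of the $Z_r$,
\begin{equation*}
\mathbb{E}\exp\Bigl(i\sum_{j=1}^k u_j S_N^{(j)}(s,t)\Bigr)=\mathbb{E}\Bigl(\sum_{j=1}^k p_j e^{iu_j}\Bigr)^{N(s,t)}=\prod_{j=1}^k\exp\bigl(\lambda p_j st(e^{iu_j}-1)\bigr),
\end{equation*}
so that $S_N^{(1)}(s,t),\dots,S_N^{(k)}(s,t)$ are mutually independent Poisson variables with means $\lambda p_j st$.

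The second step identifies each $\{S_N^{(j)}(s,t),\ (s,t)\in\mathbb{R}^2_+\}$ as a two-parameter L\'evy process with rectangular increments, and hence a PRF of rate $\lambda p_j$. Since $S_N^{(j)}$ is a compound Poisson random field on $\mathbb{R}^2_+$ driven by $N$ with iid Bernoulli$(p_j)$ weights, Proposition 3.1 of \cite{Vishwakarma2025c} delivers the L\'evy property, and the single-point computation fixes the marginal law.

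The main obstacle is upgrading marginal independence at a single point to joint independence of the $k$ random fields. The strategy parallels the four-case rectangular-decomposition argument used in Theorem \ref{thm22}. It suffices to prove, for any two points $(s,t)$ and $(s',t')$ in $\mathbb{R}^2_+$ and any indices $j\neq j'$, that $S_N^{(j)}(s,t)$ and $S_N^{(j')}(s',t')$ are independent; extension to finitely many points and to all $k$ components is then a routine Cartesian-index induction. For such a pair one partitions $(0,s]\times(0,t]$ and $(0,s']\times(0,t']$ into pairwise disjoint subrectangles according to the four relative orderings of $(s,t)$ and $(s',t')$. Rectangular increments of $S_N^{(j)}$ and $S_N^{(j')}$ supported on disjoint subrectangles are independent because they involve disjoint batches of the iid $\{Z_r\}$; on any common subrectangle, independence of the components follows from the factorization of the joint characteristic function established in the first step. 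Combining these contributions gives independence of $S_N^{(j)}(s,t)$ and $S_N^{(j')}(s',t')$, completing the argument.
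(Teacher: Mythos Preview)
Your approach is correct and matches the paper's, which does not give a separate proof but simply states that the argument is similar to that of Theorem~\ref{thm22} and Proposition~\ref{prop23}. One small caveat: the constraint $\sum_{j}Z_r^{(j)}=1$ by itself does not force $Z_r$ to be a standard basis vector---you also need $Z_r^{(j)}\in\mathbb{N}_0$, which is implicit in the conclusion that each $S_N^{(j)}$ is a PRF (hence integer-valued); once this is made explicit, your characteristic-function computation and rectangular-decomposition argument go through exactly as written.
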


\begin{theorem}
	Let $\{N_1(s,t),\ (s,t)\in\mathbb{R}^2_+\},\dots,\{N_k(s,t),\ (s,t)\in\mathbb{R}^2_+\}$ be independent PRFs with transition rates $\lambda_1$, $\lambda_2$, $\dots$, $\lambda_k$, respectively, and let $\{M(s,t)=\sum_{j=1}^{k}jN_j(s,t),\ (s,t)\in\mathbb{R}^2_+\}\sim GPRF\{\lambda_{j}\}_{1\leq j\leq k}$. Also, let $\{Z_r^{(j)}\}_{r\ge1}$, $j=1,\dots,k$ be independent sequences of iid random variables as defined in Proposition \ref{prop23}, that are independent of $\{M(s,t),\ (s,t)\in\mathbb{R}^2_+\}$.  Suppose
	$S_M(s,t)\coloneqq\sum_{j=1}^{k}\sum_{r=1}^{N_j(s,t)}Z_r^{(j)}$, $(s,t)\in\mathbb{R}$. Then, $\{S_M(s,t),\ (s,t)\in\mathbb{R}^2_+\}\sim GPRF\{\lambda_jp_j\}_{1\leq j\leq k}$ and $\{M(s,t)-S_M(s,t),\ (s,t)\in\mathbb{R}^2_+\}\sim GPRF\{\lambda_j(1-p_j)\}_{1\leq k\leq j}$ are independent GPRFs.
\end{theorem}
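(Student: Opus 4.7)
The plan is to reduce this two-parameter statement to Theorem \ref{thm22} (thinning of the PRF on $\mathbb{R}^2_+$) and then reassemble the pieces using Theorem \ref{thmrep}, following the same overall strategy that underpins Proposition \ref{prop23}. First I would replace each sequence $\{Z_r^{(j)}\}_{r\ge1}$ by its Bernoulli surrogate: let $\{B_r^{(j)}\}_{r\ge1}$ be iid Bernoulli($p_j$) variables, taken independent across $j$ and independent of $\{N_1,\dots,N_k\}$, so that $Z_r^{(j)}\overset{d}{=}jB_r^{(j)}$. Setting $\tilde{N}_j(s,t):=\sum_{r=1}^{N_j(s,t)}B_r^{(j)}$, the two random fields of interest admit the representations
\begin{equation*}
S_M(s,t)=\sum_{j=1}^{k}j\tilde{N}_j(s,t), \qquad M(s,t)-S_M(s,t)=\sum_{j=1}^{k}j\bigl(N_j(s,t)-\tilde{N}_j(s,t)\bigr).
\end{equation*}

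Next, for each fixed $j$ I would invoke Theorem \ref{thm22}: the pair of processes $\{\tilde{N}_j(s,t)\}$ and $\{N_j(s,t)-\tilde{N}_j(s,t)\}$ consists of two independent two-parameter PRFs with rate parameters $\lambda_j p_j$ and $\lambda_j(1-p_j)$, respectively. Because the $j$-th such pair is built exclusively from $N_j$ and $\{B_r^{(j)}\}_{r\ge1}$, and these building blocks are independent as $j$ varies, the entire collection of $2k$ random fields $\{\tilde{N}_j\}_{j=1}^{k}\cup\{N_j-\tilde{N}_j\}_{j=1}^{k}$ is mutually independent. Applying the forward direction of Theorem \ref{thmrep} to the $k$ independent PRFs $\{\tilde{N}_j\}_{j=1}^{k}$ (with rates $\lambda_j p_j$) then yields $\{S_M(s,t)\}\sim GPRF\{\lambda_j p_j\}_{1\le j\le k}$; the analogous argument applied to $\{N_j-\tilde{N}_j\}_{j=1}^{k}$ gives $\{M(s,t)-S_M(s,t)\}\sim GPRF\{\lambda_j(1-p_j)\}_{1\le j\le k}$.

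Finally, the independence of the two GPRFs as random fields follows by observing that $S_M$ is a deterministic function of $\{\tilde{N}_j\}_{j=1}^{k}$, $M-S_M$ is a deterministic function of $\{N_j-\tilde{N}_j\}_{j=1}^{k}$, and these two sub-families of the $2k$-fold independent collection are themselves independent. The main subtlety, and essentially the only point that merits care, is to ensure that Theorem \ref{thm22} delivers independence of $\tilde{N}_j$ and $N_j-\tilde{N}_j$ at the process level (not merely at a fixed $(s,t)$), since this is exactly what is needed to conclude independence of the two GPRFs as processes on $\mathbb{R}^2_+$; this is already the content of Theorem \ref{thm22}, so the remaining work is purely the bookkeeping of assembling the $j$-indexed decompositions.
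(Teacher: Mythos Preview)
Your proposal is correct and follows essentially the same approach as the paper, which simply states that the proof is similar to that of Theorem \ref{thm22} and Proposition \ref{prop23}. Your reduction to the Bernoulli surrogates $B_r^{(j)}$, the application of Theorem \ref{thm22} componentwise, and the reassembly via Theorem \ref{thmrep} together with the independence across $j$ is exactly the intended argument.
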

\subsection{Integrals of GPRF on $\mathbb{R}^2_+$} Let $\{M(s,t),\ (s,t)\in\mathbb{R}^2_+\}\sim GPRF\{\lambda_j\}_{1\leq j\leq k}$. We define the following integral of GPRF:
\begin{equation}\label{gprfint}
	\int_{0}^{t}\int_{0}^{s}M(x,y)\,\mathrm{d}x\,\mathrm{d}y,\ (s,t)\in\mathbb{R}^2_+.
\end{equation}
Its mean is equal to
$\sum_{j=1}^{k}j\lambda_j(st)^2/4$. The GPRF is a two parameter L\'evy process and its characteristic function is given by
$\mathbb{E}e^{\iota uM(s,t)}=\exp\bigg(st\sum_{j=1}^{k}\lambda_j(e^{\iota ju}-1)\bigg)$, $u\in\mathbb{R}$.
From Remark 5.1 and Eq. (5.4) of \cite{Vishwakarma2025b}, the characteristic function of (\ref{gprfint}) has the following representation:
\begin{equation*}
	\mathbb{E}\exp\bigg(\iota u\int_{0}^{t}\int_{0}^{s}M(x,y)\,\mathrm{d}x\,\mathrm{d}y\bigg)=\exp\bigg(st\sum_{j=1}^{k}\lambda_j\int_{0}^{1}\int_{0}^{1}(e^{\iota jstuxy}-1)\,\mathrm{d}x\,\mathrm{d}y\bigg),\ u\in\mathbb{R}.
\end{equation*}
Using it, the variance of (\ref{gprfint}) equals
$\sum_{j=1}^{k}j^2\lambda_j(st)^3/9$, $(s,t)\in\mathbb{R}^2_+$. 

Let $\{X_r\}_{r\ge1}$ be the sequence as defined in Remark \ref{rem23}, which is independent of a two parameter PRF $\{N(s,t),\ (s,t)\in\mathbb{R}^2_+\}$ with rate parameter $\lambda_1+\dots+\lambda_k$. Then, from Remark 5.2 of \cite{Vishwakarma2025b}, the following equality holds:
	\begin{equation*}
		\int_{0}^{t}\int_{0}^{s}M(x,y)\,\mathrm{d}x\,\mathrm{d}y\overset{d}{=}st\sum_{r=1}^{N(s,t)}X_rU_r,\ (s,t)\in\mathbb{R}^2_+,
	\end{equation*}
	where $U_r$'s are independent uniform variables over $[0,1]\times[0,1]$, that are independent of both $N(s,t)$ and $X_r$'s. 

\section{Fractional GPRF on $\mathbb{R}^2_+$} \label{sec4}
Different fractional variants of the GPRF indexed by positive real line are introduced and studied in \cite{Kataria2022}. Recently, a fractional variant of the PRF on $\mathbb{R}^2_+$ is introduced and studied in \cite{Kataria2024}, where it is defined as a process whose one dimensional distribution solves a system of fractional differential equations. Its time-changed and different other characterizations can be found in \cite{Aletti2018, Leonenko2015}. We now define a fractional variant of the GPRF as a time-changed two parameter L\'evy process. First, we recall the definition of an inverse stable subordinator.

A non-negative real-valued L\'evy process with almost surely non-decreasing sample path is called subordinator. A subordinator $\{H^\alpha(t),\ t\ge0\}$ is called the stable subordinator with index $\alpha\in(0,1)$ if its Laplace transform is given by $\mathbb{E}e^{-uH^\alpha(t)}=e^{-tu^\alpha}$, $u>0$. A hitting time process $\{L^\alpha(t),\ t\ge0\}$ defined as 
\begin{equation}\label{insubdef}
	L^\alpha(t)\coloneqq\inf\{s\ge0: H^\alpha(s)\ge t\}
\end{equation}
is called an inverse stable subordinator of index $\alpha$. The Laplace transform of its density with respect to the time variable is given by (see \cite{Meerscheart2013})
\begin{equation}\label{insublap}
	\int_{0}^{\infty}e^{-ut}\mathrm{Pr}\{L^\alpha(t)\in\mathrm{d}x\}=u^{\alpha-1}e^{-u^\alpha x}\,\mathrm{d}x,\ u>0,\ x\ge0,
\end{equation}
where $\mathrm{d}x$ denotes the infinitesimal length in $\mathbb{R}$. Further, we assume that $L^\alpha(t)|_{\alpha=1}=t$ almost surely.

Let $\{L^\alpha(t),\ t\ge0\}$ and $\{L^\beta(t),\ t\ge0\}$ be independent inverse stable subordinators of indices $\alpha\in(0,1)$ and $\beta\in(0,1)$, respectively. Suppose $\{M(s,t),\ (s,t)\in\mathbb{R}^2_+\}\sim GPRF\{\lambda_j\}_{1\leq j\leq k}$. We consider a time-changed two parameter process $\{M^{\alpha,\beta}(s,t),\ (s,t)\in\mathbb{R}^2_+\}$ defined as follows:
\begin{equation*}
	M^{\alpha,\beta}(s,t)\coloneqq M(L^\alpha(s),L^\beta(t)),\ 0<\alpha,\beta\leq1,
\end{equation*}
where are the component processes are assumed to be independent of each other.
We called it the fractional GPRF (FGPRF) and denote it by $\{M^{\alpha,\beta}(s,t),\ (s,t)\in\mathbb{R}^2_+\}\sim FGPRF\{\lambda_j\}_{1\leq j\leq k}$. For $\alpha=1$ and $\beta=1$, it reduces to the two parameter GPRF. For $k=1$, the FGPRF reduces to the fractional Poisson random field introduced in \cite{Leonenko2015}.
\begin{proposition}
	The pgf of FGPRF is given by
	\begin{equation}\label{fgprfpgf}
		G^{\alpha,\beta}(z,s,t)\coloneqq\mathbb{E}z^{M^{\alpha,\beta}(s,t)}={}_2\Psi_2\Bigg[\begin{matrix}
			(1,1),&(1,1)\\\\
			(1,\alpha),&(\beta,1)
		\end{matrix}\bigg|\sum_{j=1}^{k}\lambda_j(z^j-1)s^\alpha t^\beta\Bigg],\ |z|\leq 1,
	\end{equation}
	where ${}_2\Psi_2$ is the generalized Wright function as defined in (\ref{gwright}).
	
	Moreover, the pgf (\ref{fgprfpgf}) solves the following fractional partial differential equation:
	\begin{equation}\label{fgprfpgfeq}
		\frac{\partial^{\alpha+\beta}}{\partial t^\beta\partial s^\alpha}G^{\alpha,\beta}(z,s,t)=\sum_{j=1}^{k}\lambda_j(z^j-1)G^{\alpha,\beta}(z,s,t)+\sum_{j=1}^{k}\sum_{j'=1}^{k}\lambda_j\lambda_{j'}\sum_{r=1}^{j'}(z^{j+j'-r}-z^{j'-r})\frac{\partial}{\partial\lambda_1}G^{\alpha,\beta}(z,s,t),
	\end{equation}
	with initial conditions $G^{\alpha,\beta}(z,0,t)=G^{\alpha,\beta}(z,s,0)=G^{\alpha,\beta}(z,0,0)=1$ for all $s,t\ge0$, where the partial derivatives are in the sense of Caputo fractional derivative as defined in (\ref{caputoder}). 
\end{proposition}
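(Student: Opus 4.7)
The plan is to first derive the explicit Wright form of $G^{\alpha,\beta}(z,s,t)$, and then use termwise Caputo differentiation of the resulting series to verify the fractional PDE. Abbreviating $\psi(z)\coloneqq\sum_{j=1}^{k}\lambda_j(z^j-1)$ and conditioning on the pair $(L^\alpha(s),L^\beta(t))$ (which is legitimate by the assumed mutual independence of $M$, $L^\alpha$ and $L^\beta$), the pgf \eqref{gprf2pgf} of the two parameter GPRF yields
\[
G^{\alpha,\beta}(z,s,t)=\mathbb{E}\exp\bigl(\psi(z)\,L^\alpha(s)\,L^\beta(t)\bigr).
\]
Expanding the exponential into a power series, using the independence of $L^\alpha(s)$ and $L^\beta(t)$, and invoking the moment identity $\mathbb{E}[L^\alpha(s)^r]=r!\,s^{\alpha r}/\Gamma(\alpha r+1)$ (obtained by inverting the Laplace transform \eqref{insublap} in the variable $u$), I arrive at
\[
G^{\alpha,\beta}(z,s,t)=\sum_{r=0}^{\infty}\frac{r!\,(\psi(z)\,s^\alpha t^\beta)^r}{\Gamma(\alpha r+1)\,\Gamma(\beta r+1)},
\]
which I then match against the convention \eqref{gwright} to recast as the generalized Wright function claimed in \eqref{fgprfpgf}.

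To derive the fractional PDE \eqref{fgprfpgfeq}, I would apply the Caputo derivatives $\partial^\alpha/\partial s^\alpha$ and $\partial^\beta/\partial t^\beta$ termwise to the series above, using the standard rule $\partial^\alpha t^{\alpha r}/\partial t^\alpha=\Gamma(\alpha r+1)\,t^{\alpha(r-1)}/\Gamma(\alpha(r-1)+1)$ for $r\geq1$ (and $0$ for $r=0$). After reindexing $r\to r+1$ in the surviving double-derivative series and splitting $(r+1)!=r\cdot r!+r!$, I obtain
\[
\frac{\partial^{\alpha+\beta}}{\partial t^\beta\partial s^\alpha}G^{\alpha,\beta}(z,s,t)=\psi(z)\,G^{\alpha,\beta}(z,s,t)+\psi(z)^{2}\,s^\alpha t^\beta\,F'(\psi(z)s^\alpha t^\beta),
\]
where $F(x)\coloneqq\sum_{r\geq0}r!\,x^r/(\Gamma(\alpha r+1)\Gamma(\beta r+1))$. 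The second term is then rewritten by combining the chain rule identity $\partial_{\lambda_1}G^{\alpha,\beta}(z,s,t)=(z-1)\,s^\alpha t^\beta\,F'(\psi(z)s^\alpha t^\beta)$ with the elementary factorization $\psi(z)/(z-1)=\sum_{j'=1}^{k}\lambda_{j'}\sum_{r=1}^{j'}z^{j'-r}$ (coming from $z^{j'}-1=(z-1)\sum_{r=1}^{j'}z^{j'-r}$). Multiplying out produces exactly $\sum_{j,j'}\lambda_j\lambda_{j'}\sum_{r=1}^{j'}(z^{j+j'-r}-z^{j'-r})\,\partial_{\lambda_1}G^{\alpha,\beta}$, matching the right-hand side of \eqref{fgprfpgfeq}. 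The stated initial conditions follow from the fact that only the $r=0$ term survives at $s=0$ or $t=0$, which in probabilistic terms reflects $L^\alpha(0)=L^\beta(0)=0$ almost surely.

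The main obstacle I anticipate is the algebraic bookkeeping in the final rearrangement: the factor $\psi(z)^{2}/(z-1)$ must be unpacked in exactly the right way so that the double sum over $j$ and $j'$ acquires the telescoping pattern $(z^{j+j'-r}-z^{j'-r})$ dictated by the non-fractional computation \eqref{gprfpgfeq}. The interchange of termwise Caputo differentiation and summation of the Wright series is justified by the absolute convergence of ${}_2\Psi_2$ on the whole real line under the parameter constraints appearing here, so no separate convergence argument is needed beyond citing standard properties of the generalized Wright function.
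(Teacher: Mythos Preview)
Your argument is correct and proceeds by a genuinely different route. For the pgf formula \eqref{fgprfpgf}, the paper takes the double Laplace transform of $G^{\alpha,\beta}(z,s,t)$ in $(s,t)$, uses \eqref{insublap} to convert it to an ordinary double integral involving the non-fractional pgf $G(z,x,y)$, and then inverts in two stages via the Mittag--Leffler Laplace identity \eqref{opmllap}; you instead expand the exponential and use the moment formula $\mathbb{E}[L^\alpha(s)^r]=r!\,s^{\alpha r}/\Gamma(\alpha r+1)$ directly, which is shorter and avoids the Mittag--Leffler machinery. For the fractional PDE \eqref{fgprfpgfeq}, the paper again works in the Laplace domain: it Laplace-transforms both sides, substitutes the integral representation just obtained, and reduces the whole verification to the non-fractional equation \eqref{gprfpgfeq} already derived in Section~\ref{gprf2sec}; you instead differentiate the Wright series termwise and rebuild the $\partial_{\lambda_1}$ term via the chain rule together with the factorization $\psi(z)/(z-1)=\sum_{j'}\lambda_{j'}\sum_{r=1}^{j'}z^{j'-r}$. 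Your route is more elementary and self-contained; the paper's route makes the reduction to the non-fractional case structurally explicit and sidesteps any question about termwise Caputo differentiation of the Wright series. On that last point, your closing claim that ${}_2\Psi_2$ with these parameters converges absolutely on the whole real line requires $\alpha+\beta>1$ (the Wright convergence condition $\sum B_j-\sum A_i>-1$ reads $\alpha+\beta-2>-1$ here), so for small $\alpha+\beta$ the termwise-differentiation step needs a separate justification, whereas the paper's Laplace-domain verification of the PDE does not face this issue.
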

\begin{proof}
	On using (\ref{gprf2pgf}), we have
	\begin{align}
		G^{\alpha,\beta}(z,s,t)&=\iint_{\mathbb{R}^2_+}G(z,x,y)\,\mathrm{Pr}\{L^\alpha(s)\in\mathrm{d}x\}\mathrm{Pr}\{L^\beta(t)\in\mathrm{d}y\}\label{pgfeqpf1}\\
		&=\iint_{\mathbb{R}^2_+}\exp\bigg(\sum_{j=1}^{k}\lambda_jxy(z^j-1)\bigg)\,\mathrm{Pr}\{L^\alpha(s)\in\mathrm{d}x\}\mathrm{Pr}\{L^\beta(t)\in\mathrm{d}y\}.\nonumber
	\end{align}
	Its double Laplace transform  is given by
	\begin{align}
		\iint_{\mathbb{R}^2_+}e^{-us-vt}G^{\alpha,\beta}(z,s,t)\,\mathrm{d}s\,\mathrm{d}t&=u^{\alpha-1}v^{\beta-1}\iint_{\mathbb{R}^2_+}\exp\bigg(\sum_{j=1}^{k}\lambda_jxy(z^j-1)\bigg)e^{-u^\alpha x}e^{-v^\beta y}\,\mathrm{d}x\,\mathrm{d}y\nonumber\\
		&=\int_{0}^{\infty}v^{\beta-1}\frac{u^{\alpha-1}}{u^\alpha-\sum_{j=1}^{k}\lambda_jy(z^j-1)}e^{-v^\beta y}\,\mathrm{d}y,\label{prop41pf1}
	\end{align} 
	where we have used (\ref{insublap}) to get the first equality. On taking the inverse Laplace transform of (\ref{prop41pf1}) with respect to variable $u$, and using the following result (see \cite{Kilbas2006}):
	\begin{equation}\label{opmllap}
		\int_{0}^{\infty}e^{-ux}E_{\alpha,1}(cx)\,\mathrm{d}x=\frac{u^{\alpha-1}}{u^\alpha-c},\ c\in\mathbb{R},\ u>0,
	\end{equation}
	we get
	\begin{align}
		\int_{0}^{\infty}e^{-vt}G^{\alpha,\beta}(z,s,t)\,\mathrm{d}t&=\int_{0}^{\infty}v^{\beta-1}E_{\alpha,1}\bigg(\sum_{j=1}^{k}\lambda_jy(z^j-1)s^\alpha\bigg)e^{-v^\beta y}\,\mathrm{d}y\nonumber\\
		&=\sum_{r=0}^{\infty}\frac{(\sum_{j=1}^{k}\lambda_j(z^j-1)s^\alpha)^r}{\Gamma(\alpha r+1)}v^{\beta-1}\int_{0}^{\infty}y^{r}e^{-v^\beta y}\,\mathrm{d}y\nonumber\\
		&=\sum_{r=0}^{\infty}\frac{(\sum_{j=1}^{k}\lambda_j(z^j-1)s^\alpha)^r\Gamma(r+1)}{\Gamma(\alpha r+1)v^{\beta r+1}},\label{fgprfpgfpf1}
	\end{align}
	where $E_{\alpha,1}(\cdot)$ is the one parameter Mittag-Leffler function defined as (see \cite{Kilbas2006})
	\begin{equation}\label{opml}
		E_{\alpha,1}(x)\coloneqq\sum_{r=0}^{\infty}\frac{x^r}{\Gamma(\alpha r+1)},\ \alpha>0,\ x\in\mathbb{R}.
	\end{equation} 
	Note that the interchange of integral and sum in the second step follows from Fubini's theorem.
	On taking the inverse Laplace transform of (\ref{fgprfpgfeqpf1}), we get
	\begin{equation*}
		G^{\alpha,\beta}(z,s,t)=\sum_{r=0}^{\infty}\frac{(\sum_{j=1}^{k}\lambda_j(z^j-1)s^\alpha t^\beta)^r\Gamma(r+1)}{\Gamma(\alpha r+1)\Gamma(\beta r+1)}.
	\end{equation*}
	This gives the required pgf (\ref{fgprfpgf}).
	
	Now, we derive the governing equation of (\ref{fgprfpgf}). On taking the Laplace transform with respect to $s$ on both sides of  (\ref{fgprfpgfeq}) and using the following result (see \cite{Kilbas2006}):	 
	\begin{equation}\label{caputolap}
		\int_{0}^{\infty}e^{-ut}\bigg(\frac{\mathrm{d}^\alpha}{\mathrm{d}t^\alpha}f(t)\bigg)\,\mathrm{d}t=u^\alpha\int_{0}^{\infty}e^{-ut}f(t)\,\mathrm{d}t-u^{\alpha-1}f(0^+),\ u>0,
	\end{equation}
	we get
	\begin{align*}
		u^\alpha\int_{0}^{\infty}e^{-us}\frac{\partial^\beta}{\partial t^\beta}G^{\alpha,\beta}(z,s,t)\,\mathrm{d}s&=\sum_{j=1}^{k}\lambda_j(z^j-1)\int_{0}^{\infty}e^{-us}G^{\alpha,\beta}(z,s,t)\,\mathrm{d}s\\
		&\ \ +\sum_{j=1}^{k}\sum_{j'=1}^{k}\lambda_j\lambda_{j'}\sum_{r=1}^{j'}(z^{j+j'-r}-z^{j'-r})\int_{0}^{\infty}e^{-us}\frac{\partial}{\partial\lambda_1}G^{\alpha,\beta}(z,s,t)\,\mathrm{d}s,
	\end{align*}
	where we have used $\frac{\partial^\beta}{\partial t^\beta}G^{\alpha,\beta}(z,0,t)=0$. Its Laplace transform with respect to $t$ yields
	\begin{align}
		u^\alpha v^{\beta}\iint_{\mathbb{R}^2_+}e^{-us-vt}&G^{\alpha,\beta}(z,s,t)\,\mathrm{d}s\,\mathrm{d}t-u^{\alpha-1}v^{\beta-1}\nonumber\\
		&=\sum_{j=1}^{k}\lambda_j(z^j-1)\iint_{\mathbb{R}^2_+}e^{-us-vt}G^{\alpha,\beta}(z,s,t)\,\mathrm{d}s\,\mathrm{d}t\nonumber\\
		&\ \ +\sum_{j=1}^{k}\sum_{j'=1}^{k}\lambda_j\lambda_{j'}\sum_{r=1}^{j'}(z^{j+j'-r}-z^{j'-r})\iint_{\mathbb{R}^2_+}e^{-us-vt}\frac{\partial}{\partial\lambda_1}G^{\alpha,\beta}(z,s,t)\,\mathrm{d}s\,\mathrm{d}t,\label{fgprfpgfeqpf1}
	\end{align}
	where we have used $G^{\alpha,\beta}(z,s,0)=1$. Now, we show that the double Laplace transform \begin{equation}\label{fgprfpgfpf2}
		\iint_{\mathbb{R}^2_+}e^{-us-vt}G^{\alpha,\beta}(z,s,t)\,\mathrm{d}s\,\mathrm{d}t=u^{\alpha-1}v^{\beta-1}\iint_{\mathbb{R}^2_+}G(z,x,y)e^{-u^\alpha x-v^\beta y}\,\mathrm{d}x\,\mathrm{d}y
	\end{equation}
	of (\ref{pgfeqpf1})	satisfies (\ref{fgprfpgfeqpf1}), where we have used (\ref{insublap}). On substituting (\ref{fgprfpgfpf2}) in the right hand side of (\ref{fgprfpgfeqpf1}) and using (\ref{gprfpgfeq}), we get
	\begin{align*}
		\sum_{j=1}^{k}\lambda_j(z^j-1)
		&u^{\alpha-1}v^{\beta-1}\iint_{\mathbb{R}^2_+}G(z,x,y)e^{-u^\alpha x-v^\beta y}\,\mathrm{d}x\,\mathrm{d}y \\ &\ \ +\sum_{j=1}^{k}\sum_{j'=1}^{k}\lambda_j\lambda_{j'}\sum_{r=1}^{j'}(z^{j+j'-r}-z^{j'-r})\frac{\partial}{\partial\lambda_1}u^{\alpha-1}v^{\beta-1}\iint_{\mathbb{R}^2_+}G(z,x,y)e^{-u^\alpha x-v^\beta y}\,\mathrm{d}x\,\mathrm{d}y\\
		&=u^{\alpha-1}v^{\beta-1}\iint_{\mathbb{R}^2_+}\bigg(\frac{\partial^2}{\partial x\partial y}G(z,x,y)\bigg)e^{-u^\alpha x-v^\beta y}\,\mathrm{d}x\,\mathrm{d}y\\
		&=u^{\alpha-1}v^{\beta-1}u^\alpha\iint_{\mathbb{R}^2_+}e^{-u^\alpha x-v^\beta y}\bigg(\frac{\partial}{\partial y}G(z,x,y)\bigg)\,\mathrm{d}x\,\mathrm{d}y\\
		&=u^{\alpha-1}v^{\beta-1}u^\alpha v^\beta\iint_{\mathbb{R}^2_+}e^{-u^\alpha x-v^\beta y}G(z,x,y)\,\mathrm{d}x\,\mathrm{d}y-u^{\alpha-1}v^{\beta-1}\\
		&=u^\alpha v^\beta\iint_{\mathbb{R}^2_+}e^{-us-vt}G^{\alpha,\beta}(z,s,t)\,\mathrm{d}s\,\mathrm{d}t-u^{\alpha-1}v^{\beta-1},
	\end{align*}
	which coincides with the left-hand side of (\ref{fgprfpgfeqpf1}). In the first step, the interchange of derivative $\partial/\partial\lambda_1$ and the integral is justified because
	\begin{align*}
		\iint_{\mathbb{R}^2_+}\bigg|\frac{\partial}{\partial \lambda_1}G(z,x,y)\bigg|e^{-u^\alpha x-v^\beta y}\,\mathrm{d}x\,\mathrm{d}y&=\iint_{\mathbb{R}^2_+}\bigg|xy(z-1)\exp\bigg(\sum_{j=1}^{k}\lambda_jxy(z^j-1)\bigg)\bigg|e^{-u^\alpha x-v^\beta y}\,\mathrm{d}x\,\mathrm{d}y\\
		&\leq \iint_{\mathbb{R}^2_+}xye^{-u^\alpha x-v^\beta y}\,\mathrm{d}x\,\mathrm{d}y<\infty,\ u>0,\ v>0.
	\end{align*}
	This completes the proof.
\end{proof}
\begin{theorem}\label{thm41}
	The distribution $p^{\alpha,\beta}(n,s,t)=\mathrm{Pr}\{M^{\alpha,\beta}(s,t)=n\}$, $n\ge0$ of FGPRF is given by
	\begin{equation}\label{fgprfdis}
		p^{\alpha,\beta}(n,s,t)=\sum_{\Theta(k,n)}\prod_{j=1}^{k}\frac{(\lambda_js^\alpha t^\beta)^{n_j}}{n_j!}{}_2\Psi_2\Bigg[\begin{matrix}
			(\sum_{j=1}^{k}n_j+1,1),&(\sum_{j=1}^{k}n_j+1,1)\\\\
			(\alpha\sum_{j=1}^{k}n_j+1,\alpha),&(\beta\sum_{j=1}^{k}n_j+1,\beta)
		\end{matrix}\bigg|-\sum_{j=1}^{k}\lambda_js^\alpha t^\beta\Bigg]
	\end{equation}
	for all $(s,t)\in\mathbb{R}^2_+$, where ${}_2\Psi_2$ is the generalized Wright function as defined in (\ref{gwright}).
	
	In particular, the point probabilities in (\ref{fgprfdis}) solve the following system of fractional differential equations:
	\begin{align*}
		\frac{\partial^{\alpha+\beta}}{\partial t^\beta\partial s^\alpha}p^{\alpha,\beta}(n,s,t)=-\sum_{j=1}^{k}\lambda_j(I-B^j)\bigg(1+\sum_{j'=1}^{k}\lambda_{j'}\sum_{r=1}^{j'}B^{j'-r}\frac{\partial}{\partial\lambda_1}\bigg)p^{\alpha,\beta}(n,s,t),\ n\ge0,
	\end{align*}
	with initial condition $p^{\alpha,\beta}(0,0,0)=1$. Here, $B$ denotes the backward shift operator, defined as $B^jp^{\alpha,\beta}(n,s,t)=p^{\alpha,\beta}(n-j,s,t)$. Also, it is assumed that $p^{\alpha,\beta}(n,s,t)=0$ for all $n<0$.
\end{theorem}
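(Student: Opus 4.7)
The plan is to derive the explicit distribution by a direct conditioning argument on the two independent inverse stable subordinators, and then to obtain the fractional differential equation by reading off the coefficient of $z^n$ from the pgf equation (\ref{fgprfpgfeq}) that has already been established.

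For the distribution formula (\ref{fgprfdis}), first I would condition on $L^\alpha(s)$ and $L^\beta(t)$ to write
\begin{equation*}
p^{\alpha,\beta}(n,s,t) = \iint_{\mathbb{R}^2_+} p(n,x,y)\,\mathrm{Pr}\{L^\alpha(s)\in\mathrm{d}x\}\,\mathrm{Pr}\{L^\beta(t)\in\mathrm{d}y\},
\end{equation*}
where $p(n,x,y)$ is the GPRF mass function given by (\ref{GPRFdist}). Setting $N:=\sum_{j=1}^{k}n_j$ and pulling the $\Theta(k,n)$-sum outside, the integrand factors as $(xy)^{N}e^{-(\sum_{j}\lambda_j)xy}$ times $\prod_j \lambda_j^{n_j}/n_j!$. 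Expanding $e^{-(\sum_j\lambda_j)xy}$ as a power series and interchanging the sum and integral (justified by absolute convergence), each term collapses to the product of $(r+N)$-th moments of the two inverse stable subordinators. Using the standard moment formula $\mathbb{E}[L^\alpha(s)^m]=m!\,s^{\alpha m}/\Gamma(\alpha m+1)$, which follows by integrating $x^m$ against (\ref{insublap}) and inverting the Laplace transform in $s$, the double integral yields
\begin{equation*}
\mathbb{E}[L^\alpha(s)^{r+N}]\mathbb{E}[L^\beta(t)^{r+N}] = \frac{\Gamma(r+N+1)^2\,s^{\alpha(r+N)}t^{\beta(r+N)}}{\Gamma(\alpha(r+N)+1)\Gamma(\beta(r+N)+1)}.
\end{equation*}
Absorbing the $s^{\alpha N}t^{\beta N}$ factor into $\prod_j(\lambda_j s^\alpha t^\beta)^{n_j}$ and comparing the remaining series in $r$ with the definition (\ref{gwright}) recovers the generalized Wright function with numerator parameters $(N+1,1),(N+1,1)$ and denominator parameters $(\alpha N+1,\alpha),(\beta N+1,\beta)$, producing exactly (\ref{fgprfdis}).

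For the governing equation, I would extract the coefficient of $z^n$ on both sides of (\ref{fgprfpgfeq}). The left side immediately gives $\frac{\partial^{\alpha+\beta}}{\partial t^\beta\partial s^\alpha}p^{\alpha,\beta}(n,s,t)$. On the right, the term $\sum_j\lambda_j(z^j-1)G^{\alpha,\beta}$ produces $\sum_j\lambda_j(B^j-I)p^{\alpha,\beta}(n,s,t)=-\sum_j\lambda_j(I-B^j)p^{\alpha,\beta}(n,s,t)$. The key step for the second (double) sum is the factorization $z^{j+j'-r}-z^{j'-r}=z^{j'-r}(z^j-1)$, which pulls the $(z^j-1)$ factor out so that the double sum reorganizes as
\begin{equation*}
\sum_{j=1}^{k}\lambda_j(z^j-1)\cdot\sum_{j'=1}^{k}\lambda_{j'}\sum_{r=1}^{j'}z^{j'-r}\,\frac{\partial}{\partial\lambda_1}G^{\alpha,\beta}(z,s,t).
\end{equation*}
Reading coefficients converts this into $-\sum_j\lambda_j(I-B^j)\sum_{j'}\lambda_{j'}\sum_{r=1}^{j'}B^{j'-r}\frac{\partial}{\partial\lambda_1}p^{\alpha,\beta}(n,s,t)$. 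Combining these two contributions gives precisely the stated equation, and the initial condition $p^{\alpha,\beta}(0,0,0)=1$ follows from $G^{\alpha,\beta}(z,0,0)=1$, while the convention $p^{\alpha,\beta}(n,s,t)=0$ for $n<0$ is what makes the backward-shift operators well defined throughout.

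The main obstacle lies in the first part: verifying absolute convergence of the triple series-integral so that the term-by-term interchange and the identification with the generalized Wright series are legitimate for all $(s,t)\in\mathbb{R}^2_+$, and then aligning the exponents of $s^\alpha$, $t^\beta$ and the Gamma arguments with the parameter conventions in (\ref{gwright}). The second part is essentially mechanical once the algebraic factorization $z^{j+j'-r}-z^{j'-r}=z^{j'-r}(z^j-1)$ is spotted.
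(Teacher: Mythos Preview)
Your proposal is correct; the second part (coefficient extraction from the pgf equation) is essentially identical to the paper's argument. The first part, however, proceeds differently: whereas the paper takes a double Laplace transform in $(s,t)$, identifies the resulting $x$-integral via the Laplace formula (\ref{tpmllap}) for the three-parameter Mittag-Leffler function $E_{\alpha,\alpha N+1}^{N+1}$, and then inverts twice, you bypass the Laplace machinery entirely by expanding $e^{-\Lambda xy}$ directly and using the moment identity $\mathbb{E}[L^\alpha(s)^m]=\Gamma(m+1)s^{\alpha m}/\Gamma(\alpha m+1)$. Your route is shorter and avoids invoking (\ref{tpml})--(\ref{tpmllap}); the paper's Laplace approach, on the other hand, keeps all intermediate integrals in closed form before any series expansion, which makes the Fubini justifications somewhat cleaner (each interchange involves a single $y$-integral against $e^{-v^\beta y}$ rather than the product density of two inverse stable subordinators). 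The one point you flag as the ``main obstacle'' is indeed the delicate one in your version: the absolute series $\sum_r (\Lambda xy)^r(xy)^N/r!$ integrates to $\mathbb{E}[(L^\alpha(s)L^\beta(t))^N e^{\Lambda L^\alpha(s)L^\beta(t)}]$, whose finiteness is not entirely obvious for small $\alpha+\beta$, so you may need to argue via dominated convergence of partial sums or via the known asymptotics of the inverse-stable density rather than a one-line appeal to Fubini.
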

\begin{proof}
	For $n\ge0$, we have
	\begin{equation*}
		p^{\alpha,\beta}(n,s,t)=\iint_{\mathbb{R}^2_+}\mathrm{Pr}\{M(x,y)=n\}\mathrm{Pr}\{L^\alpha(s)\in\mathrm{d}x\}\mathrm{Pr}\{L^\beta(t)\in\mathrm{d}y\},
	\end{equation*}
	whose double Laplace transform is given by
	\begin{align}
		\iint_{\mathbb{R}^2_+}e^{-us-vt}&p^{\alpha,\beta}(n,s,t)\,\mathrm{d}s\,\mathrm{d}t\nonumber\\
		&=u^{\alpha-1}v^{\beta-1}\iint_{\mathbb{R}^2_+}\sum_{\Theta(k,n)}\prod_{j=1}^{k}\frac{(\lambda_jxy)^{n_j}}{n_j!}e^{-\lambda_jxy}e^{-u^\alpha x-v^\beta y}\,\mathrm{d}x\,\mathrm{d}y\nonumber\\
		&=v^{\beta-1}\sum_{\Theta(k,n)}\prod_{j=1}^{k}\frac{\lambda_j^{n_j}}{n_j!}\int_{0}^{\infty}y^{\sum_{j=1}^{k}n_j}\frac{u^{\alpha-1}\Gamma(\sum_{j=1}^{k}n_j+1)}{(u^\alpha+\sum_{j=1}^{k}\lambda_jy)^{\sum_{j=1}^{k}n_j+1}}e^{-v^\beta y}\,\mathrm{d}y,\label{thm41pf1}
	\end{align}
	where we have used (\ref{insublap}) to get the first equality. Here, the interchange of integral with Laplace transform and sum is valid due to Fubini's theorem. On taking the inverse Laplace transform on both sides of (\ref{thm41pf1}) with respect to variable $u$ and using the Laplace transform of the three parameter Mittag-Leffler function (see \cite{Kilbas2006})
	\begin{equation}\label{tpml}
		E_{\alpha,\beta}^\gamma(x)\coloneqq\sum_{r=0}^{\infty}\frac{\Gamma(\gamma+r)x^r}{\Gamma(\gamma)\Gamma(\alpha r+\beta)r!},\ \alpha>0, \beta,\gamma, x\in\mathbb{R},
	\end{equation}
	given as 
	\begin{equation}\label{tpmllap}
		\int_{0}^{\infty}e^{-ux}x^{\beta-1}E_{\alpha,\beta}^\gamma(cx)\,\mathrm{d}x=\frac{u^{\alpha\gamma-\beta}}{(u^\alpha-c)^\gamma},\ c\in\mathbb{R},\ u^\alpha>|c|,\ u>0,
	\end{equation}
	we get
	\begin{align*}
		\int_{0}^{\infty}&e^{-vt}p^{\alpha,\beta}(n,s,t)\,\mathrm{d}t\\
		&=v^{\beta-1}\sum_{\Theta(k,n)}\prod_{j=1}^{k}\frac{\lambda_j^{n_j}}{n_j!}\Gamma\bigg(\sum_{j=1}^{k}n_j+1\bigg)s^{\alpha\sum_{j=1}^{k}n_j}\\
		&\hspace{4cm}\cdot\int_{0}^{\infty}y^{\sum_{j=1}^{k}n_j}E_{\alpha, \alpha\sum_{j=1}^{k}n_j+1}^{\sum_{j=1}^{k}n_j+1}\bigg(-\sum_{j=1}^{k}\lambda_jy s^\alpha\bigg)e^{-v^\beta y}\,\mathrm{d}y\\
		&=v^{\beta-1}\sum_{\Theta(k,n)}\prod_{j=1}^{k}\frac{\lambda_j^{n_j}}{n_j!}\Gamma\bigg(\sum_{j=1}^{k}n_j+1\bigg)s^{\alpha\sum_{j=1}^{k}n_j}\sum_{r=0}^{\infty}\frac{\Gamma(\sum_{j=1}^{k}n_j+1+r)\big(-\sum_{j=1}^{k}\lambda_js^\alpha\big)^r}{\Gamma(\sum_{j=1}^{k}n_j+1)\Gamma(\alpha r+\alpha\sum_{j=1}^{k}n_j+1)r!}\\
		&\hspace{4.5cm}\cdot\int_{0}^{\infty}y^{\sum_{j=1}^{k}n_j+r}e^{-v^\beta y}\,\mathrm{d}y\\
		&=\sum_{\Theta(k,n)}\prod_{j=1}^{k}\frac{\lambda_j^{n_j}}{n_j!}s^{\alpha\sum_{j=1}^{k}n_j}\sum_{r=0}^{\infty}\frac{\Gamma(\sum_{j=1}^{k}n_j+1+r)\big(-\sum_{j=1}^{k}\lambda_js^\alpha\big)^r}{\Gamma(\alpha r+\alpha\sum_{j=1}^{k}n_j+1)r!}\frac{\Gamma(\sum_{j=1}^{k}n_j+r+1)}{v^{\beta(\sum_{j=1}^{k}n_j+r)+1}}.
	\end{align*}
	Its inverse Laplace transform with respect to $v$ yields
	\begin{equation*}
		p^{\alpha,\beta}(n,s,t)=\sum_{\Theta(k,n)}\prod_{j=1}^{k}\frac{(\lambda_js^\alpha t^\beta)^{n_j}}{n_j!}\sum_{r=0}^{\infty}\frac{(\Gamma(\sum_{j=1}^{k}n_j+1+r))^2\big(-\sum_{j=1}^{k}\lambda_js^\alpha t^\beta\big)^r}{\Gamma(\alpha r+\alpha\sum_{j=1}^{k}n_j+1)\Gamma(\beta r+\beta\sum_{j=1}^{k}n_j+1)r!},
	\end{equation*}
	where again the interchange of the inverse Laplace transform and integral follows from Fubini's theorem.
	This yields the distribution of FGPRF.
	
	Now, on substituting $G^{\alpha,\beta}(z,s,t)=\sum_{n=0}^{\infty}z^np^{\alpha,\beta}(n,s,t)$ in (\ref{fgprfpgfeq}), we have
	\begin{align}
		&\sum_{n=0}^{\infty}z^n\frac{\partial^{\alpha+\beta}}{\partial t^\beta\partial s^\alpha}p^{\alpha,\beta}(n,s,t)\nonumber\\
		&=\sum_{n=0}^{\infty}\sum_{j=1}^{k}\lambda_j(z^j-1)z^np^{\alpha,\beta}(n,s,t)+\sum_{n=0}^{\infty}\sum_{j=1}^{k}\sum_{j'=1}^{k}\lambda_j\lambda_{j'}\sum_{r=1}^{j'}(z^{j+j'-r}-z^{j'-r})z^n\frac{\partial}{\partial\lambda_1}p^{\alpha,\beta}(n,s,t)\nonumber\\
		&=\sum_{n=0}^{\infty}\sum_{j=1}^{k}\lambda_jz^{n+j}p^{\alpha,\beta}(n,s,t)-\sum_{n=0}^{\infty}\sum_{j=1}^{k}\lambda_jz^np^{\alpha,\beta}(n,s,t)\nonumber\\
		&\ \ +\sum_{j=1}^{k}\sum_{j'=1}^{k}\lambda_j\lambda_{j'}\sum_{r=1}^{j'}\sum_{n=0}^{\infty}z^{n+j+j'-r}\frac{\partial}{\partial\lambda_1}p^{\alpha,\beta}(n,s,t)-\sum_{j=1}^{k}\sum_{j'=1}^{k}\lambda_j\lambda_{j'}\sum_{r=1}^{j'}\sum_{n=0}^{\infty}z^{n+j'-r}\frac{\partial}{\partial\lambda_1}p^{\alpha,\beta}(n,s,t)\nonumber\\
		&=\sum_{j=1}^{k}\lambda_j\sum_{n=j}^{\infty}z^np^{\alpha,\beta}(n-j,s,t)-\sum_{n=0}^{\infty}\sum_{j=1}^{k}\lambda_jz^np^{\alpha,\beta}(n,s,t)\nonumber\\
		&\ \ +\sum_{j=1}^{k}\sum_{j'=1}^{k}\lambda_j\lambda_{j'}\sum_{r=1}^{j'}\sum_{n=j+j'-r}^{\infty}z^{n}\frac{\partial}{\partial\lambda_1}p^{\alpha,\beta}(n-j-j'+r,s,t)\nonumber\\
		&\ \ -\sum_{j=1}^{k}\sum_{j'=1}^{k}\lambda_j\lambda_{j'}\sum_{r=1}^{j'}\sum_{n=j'-r}^{\infty}z^{n}\frac{\partial}{\partial\lambda_1}p^{\alpha,\beta}(n-j'+r,s,t)\nonumber\\
		&=\sum_{n=0}^{\infty}z^n\sum_{j=1}^{k}\bigg(\lambda_jp^{\alpha,\beta}(n-j,s,t)-\lambda_jp^{\alpha,\beta}(n,s,t)+\sum_{j'=1}^{k}\lambda_j\lambda_{j'}\sum_{r=1}^{j'}\frac{\partial}{\partial\lambda_1}p^{\alpha,\beta}(n-j-j'+r,s,t)\nonumber\\
		&\ \ -\sum_{j'=1}^{k}\lambda_j\lambda_{j'}\sum_{r=1}^{j'}\frac{\partial}{\partial\lambda_1}p^{\alpha,\beta}(n-j'+r,s,t)\bigg).\label{thm41pf2}
	\end{align}
	On comparing the coefficients of $z^n$ on both sides of (\ref{thm41pf2}), we get 
	\begin{align*}
		\frac{\partial^{\alpha+\beta}}{\partial t^\beta\partial s^\alpha}&p^{\alpha,\beta}(n,s,t)\\
		&=\sum_{j=1}^{k}\big(\lambda_jp^{\alpha,\beta}(n-j,s,t)-\lambda_jp^{\alpha,\beta}(n,s,t)\big)\\
		&\ \ +\sum_{j=1}^{k}\sum_{j'=1}^{k}\lambda_j\lambda_{j'}\sum_{r=1}^{j'}\frac{\partial}{\partial\lambda_1}\big(p^{\alpha,\beta}(n-j-j'+r,s,t)-p^{\alpha,\beta}(n-j'+r,s,t)\big)\\
		&=-\sum_{j=1}^{k}\lambda_j\big(I-B^j\big)p^{\alpha,\beta}(n,s,t)-\sum_{j=1}^{k}\sum_{j'=1}^{k}\lambda_j\lambda_{j'}\sum_{r=1}^{j'}B^{j'-r}(I-B^{j})\frac{\partial}{\partial\lambda_1}p^{\alpha,\beta}(n,s,t),\ n\ge0.
	\end{align*}
	This completes the proof.
\end{proof}
\begin{remark}
	From Remark \ref{rem21}, for any rectangle $[0,s]\times[0,t]\subset\mathbb{R}^2_+$, the capacity functional of FGPRF is given by 
	\begin{equation*}
		T_M([0,s]\times[0,t])=1-{}_2\Psi_2\Bigg[\begin{matrix}
			(1,1),&(1,1)\\\\
			(1,\alpha),&(1,\beta)
		\end{matrix}\bigg|-\sum_{j=1}^{k}\lambda_js^\alpha t^\beta\Bigg].
	\end{equation*}
\end{remark}
\begin{remark}
	For $k=1$, (\ref{fgprfdis}) reduces to the distribution of a fractional Poisson random field on $\mathbb{R}^2_+$, given as follows:
	\begin{equation*}
		p^{\alpha,\beta}(n,s,t)|_{k=1}=\frac{(\lambda s^\alpha t^\beta)^{n}}{n!}{}_2\Psi_2\Bigg[\begin{matrix}
			(n+1,1),&(n+1,1)\\\\
			(\alpha n+1,\alpha),&(\beta n+1,\beta)
		\end{matrix}\bigg|-\lambda s^\alpha t^\beta\Bigg], n\ge0,
	\end{equation*}
	which coincides with Eq. (53) of \cite{Kataria2024}. Its governing differential equations are 
	\begin{equation*}
		\frac{\partial^{\alpha+\beta}}{\partial t^\beta\partial s^\alpha}p^{\alpha,\beta}(n,s,t)|_{k=1}=-\lambda\bigg(1+\lambda\frac{\partial}{\partial\lambda}\bigg)(I-B)p^{\alpha,\beta}(n,s,t)|_{k=1},\ n\ge0,
	\end{equation*}
	which agrees with Eq. (5.11) of \cite{Vishwakarma2025c}.
\end{remark}

Suppose $\{M^{\alpha,\beta}(s,t),\ (s,t)\in\mathbb{R}^2_+\}\sim FGPRF\{\lambda_j\}_{1\leq j\leq k}$. From Remark \ref{rem41}, we have the following representation of FGPRF: 
\begin{equation*}
	M^{\alpha,\beta}(s,t)\overset{d}{=}\sum_{j=1}^{k}jN_j(L^\alpha(s), L^\beta(t)),\ (s,t)\in\mathbb{R}^2_+,\ 0<\alpha,\beta<1,
\end{equation*}
where $\{L^\alpha(s),\ s\ge0\}$ and $\{L^\beta(t),\ t\ge0\}$ are independent inverse stable subordinators, and conditional on $(L^\alpha(s),L^\beta(t))$, $\{N_j(L^\alpha(s), L^\beta(t)),\ (s,t)\in\mathbb{R}^2_+\}$, $j=1,\dots,k$ are independent fractional Poisson random fields, introduced in \cite{Leonenko2015}. Hence,
\begin{equation*}
	\mathbb{E}M^{\alpha,\beta}(s,t)=\sum_{j=1}^{k}\frac{j\lambda_js^\alpha t^\beta}{\Gamma(\alpha+1)\Gamma(\beta+1)},\ (s,t)\in\mathbb{R}^2_+,
\end{equation*}
where we have used $\mathbb{E}N_j(L^\alpha(s),L^\beta(t))=\lambda_js^\alpha t^\beta/\Gamma(\alpha+1)$ for each $1\leq j\leq k$ (see \cite{Kataria2024}, Eq. (25)).
The variance of FGPRF is given by
{\small\begin{align*}
	\mathbb{V}\mathrm{ar}M^{\alpha,\beta}(s,t)&=\mathbb{E}(\mathbb{V}\mathrm{ar}(M^{\alpha,\beta}(s,t)|(L^\alpha(s),L^\beta(t))))+\mathbb{V}\mathrm{ar}(\mathbb{E}(M^{\alpha,\beta}(s,t)|(L^\alpha(s),L^\beta(t))))\\
	&=\sum_{j=1}^{k}j^2\lambda_j\mathbb{E}L^\alpha(s)L^\beta(t)+\bigg(\sum_{j=1}^{k}j\lambda_j\bigg)^2\mathbb{V}\mathrm{ar}L^\alpha(s)L^\beta(t)\\
	&=\sum_{j=1}^{k}\frac{j^2\lambda_j s^\alpha t^\beta}{\Gamma(\alpha+1)\Gamma(\beta+1)}+\bigg(\sum_{j=1}^{k}j\lambda_js^\alpha t^\beta\bigg)^2\bigg(\frac{4}{\Gamma(2\alpha+1)\Gamma(2\beta+1)}-\frac{1}{\Gamma^2(\alpha+1)\Gamma^2(\beta+1)}\bigg).
\end{align*}}
As the GPRF is a two parameter L\'evy process, for $(0,0)\preceq(s,t)\preceq(s',t')$ in $\mathbb{R}^2_+$, from Theorem 2.2 of \cite{Vishwakarma2025d}, the auto covariance function of FGPRF is given by
	\begin{align*}
		\mathbb{C}\mathrm{ov}(M^{\alpha,\beta}(s,t)&,M^{\alpha,\beta}(s',t'))\\
		&=\mathbb{E}(L^\alpha(s)L^\beta(t))\mathbb{V}\mathrm{ar}M(1,1)+\mathbb{C}\mathrm{ov}(L^\alpha(s)L^\beta(t),L^\alpha(s')L^\beta(t'))(\mathbb{E}M(1,1))^2\\
		&=\mathbb{E}(L^\alpha(s)L^\beta(s'))\sum_{j=1}^{k}j^2\lambda_j+\mathbb{C}\mathrm{ov}(L^\alpha(s)L^\beta(s'),L^\alpha(t)L^\beta(t'))(\sum_{j=1}^{k}j\lambda_j)^2,
	\end{align*}
	where (see \cite{Vishwakarma2025d}, Example 2.3)
	\begin{equation}\label{expinsub}
		\mathbb{E}(L^\alpha(s)L^\beta(t))=\frac{s^\alpha t^{\beta}}{\Gamma(\alpha+1)\Gamma(\beta+1)},\ (s,t)\in\mathbb{R}^2_+
	\end{equation}
	and
	\begin{align}\label{covinsub}
		\mathbb{C}\mathrm{ov}(L^\alpha(s)L^\beta(t),L^\alpha(s')L^\beta(t'))&=\frac{1}{\Gamma(\alpha+1)\Gamma(\alpha)}\int_{0}^{s}((s'-x)^\alpha+(s-x)^\alpha)x^{\alpha-1}\,\mathrm{d}x\nonumber\\
		&\ \ \cdot \frac{1}{\Gamma(\beta+1)\Gamma(\beta)}\int_{0}^{t}((t'-y)^{\beta}+(t-y)^{\beta})y^{\beta-1}\,\mathrm{d}y\nonumber\\
		&\ \ -\frac{(ss')^\alpha(tt')^\beta}{\Gamma^2(\alpha+1)\Gamma^2(\beta+1)},\ (s,t)\preceq(s',t').
	\end{align}

\section{Generalized Skellam point processes}\label{sec5}
The two parameter Skellam random field via independent PRFs and its fractional variants are investigated in \cite{Vishwakarma2025b}. We now define a generalized Skellam point process using independent GPRFs. 
Let $\mathcal{I}\subset\mathbb{R}-\{0\}$ be a finite subset. For a fix $k\ge1$, let $\{M_i(A), A\in\mathcal{A}_d\}\sim\{\lambda_j^{(i)}\}_{1\leq j\leq k}$, $i\in\mathcal{I}$ be independent GPRF. We consider a point process $\{S(A),\ A\in\mathcal{A}_d\}$ defined as
\begin{equation}\label{gspp}
	S(A)\coloneqq \sum_{i\in\mathcal{I}}iM_i(A),\ A\in\mathcal{A}_d.
\end{equation}
For $k=1$ and $d=1$, it reduces to a generalized Skellam process introduced and studied in \cite{Cinque2025}. 
The mgf of (\ref{gspp}) is given by
\begin{equation}\label{gspppgf}
	\mathbb{E}e^{uS(A)}=\prod_{i\in\mathcal{I}}\mathbb{E}e^{uiM_i(A)}=\exp\bigg(\sum_{i\in\mathcal{I}}\sum_{j=1}^{k}\lambda_{j}^{(i)}|A|\big(e^{iju}-1\big)\bigg),\ u\in\mathbb{R}.
\end{equation}
In the next result, we show that (\ref{gspp}) is equal in distribution to a compound Poisson random field as defined in \cite{Vishwakarma2025c}.
\begin{proposition}\label{prop61}
	 Let $(X_1,Y_1),(X_2,Y_2),\dots$ be iid random vectors taking their values in $\mathcal{I}\times\{1,2,\dots,k\}$ such that 
	\begin{equation*}
		\mathrm{Pr}\{X_1=i, Y_1=j\}=\frac{\lambda_j^{(i)}}{\sum_{i\in\mathcal{I}}\sum_{j=1}^{k}\lambda_j^{(i)}},\ \text{for}\ (i,j)\in\mathcal{I}\times\{1,2,\dots,k\},
	\end{equation*}
	where $\lambda_j^{(i)}$'s are positive constants.
	 Let $\{N(A),\ A\in\mathcal{A}_d\}$ be a PRF with rate parameter $\sum_{i\in\mathcal{I}}\sum_{j=1}^{k}\lambda_j^{(i)}$, which is independent of $\{(X_r,Y_r)\}_{r\ge1}$. Then, the point process $\{S(A),\ A\in\mathcal{A}_d\}$ as defined in (\ref{gspp}) satisfies the following equality:
	\begin{equation*}
		S(A)\overset{d}{=}\sum_{r=1}^{N(A)}X_rY_r,\ A\in\mathcal{A}_d.
	\end{equation*}
\end{proposition}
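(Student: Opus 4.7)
The plan is to prove the distributional identity by showing that both sides of the claimed equality have the same moment generating function. Since the moment generating function $\mathbb{E}e^{uS(A)}$ of the left-hand side is already given explicitly in (\ref{gspppgf}), the task reduces to computing the moment generating function of the compound sum on the right.

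First, I would condition on $N(A)$. Using the independence of $\{(X_r,Y_r)\}_{r\ge1}$ from $\{N(A),\ A\in\mathcal{A}_d\}$ together with the fact that $(X_r,Y_r)$ are iid, I get
\begin{equation*}
	\mathbb{E}\exp\bigg(u\sum_{r=1}^{N(A)}X_rY_r\bigg)=\mathbb{E}\big(\mathbb{E}e^{uX_1Y_1}\big)^{N(A)}.
\end{equation*}
Next, invoking that $N(A)$ is Poisson distributed with mean $\big(\sum_{i\in\mathcal{I}}\sum_{j=1}^{k}\lambda_j^{(i)}\big)|A|$, its probability generating function evaluated at $\mathbb{E}e^{uX_1Y_1}$ yields
\begin{equation*}
	\mathbb{E}\exp\bigg(u\sum_{r=1}^{N(A)}X_rY_r\bigg)=\exp\bigg(\Big(\sum_{i\in\mathcal{I}}\sum_{j=1}^{k}\lambda_j^{(i)}\Big)|A|\big(\mathbb{E}e^{uX_1Y_1}-1\big)\bigg).
\end{equation*}

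Then I would evaluate $\mathbb{E}e^{uX_1Y_1}$ directly from the joint law of $(X_1,Y_1)$:
\begin{equation*}
	\mathbb{E}e^{uX_1Y_1}=\sum_{i\in\mathcal{I}}\sum_{j=1}^{k}e^{iju}\frac{\lambda_j^{(i)}}{\sum_{i\in\mathcal{I}}\sum_{j=1}^{k}\lambda_j^{(i)}}.
\end{equation*}
Substituting this into the previous display, the normalizing denominator cancels with the rate factor outside, leaving
\begin{equation*}
	\mathbb{E}\exp\bigg(u\sum_{r=1}^{N(A)}X_rY_r\bigg)=\exp\bigg(\sum_{i\in\mathcal{I}}\sum_{j=1}^{k}\lambda_j^{(i)}|A|\big(e^{iju}-1\big)\bigg),
\end{equation*}
which coincides with (\ref{gspppgf}). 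By the uniqueness of the moment generating function, this forces $S(A)\overset{d}{=}\sum_{r=1}^{N(A)}X_rY_r$.

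I do not foresee a genuine obstacle here; the argument is a standard compound Poisson computation and the only mild subtlety is the bookkeeping of the two indices $(i,j)$ when expanding $\mathbb{E}e^{uX_1Y_1}$ so that the sums line up exactly with those in (\ref{gspppgf}). No convergence issues arise because $\mathcal{I}$ and $\{1,\dots,k\}$ are finite, so all sums and expectations are finite for every $u\in\mathbb{R}$.
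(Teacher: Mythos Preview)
Your proof is correct and follows essentially the same approach as the paper: both compute $\mathbb{E}e^{uX_1Y_1}$ from the joint law of $(X_1,Y_1)$, feed it into the Poisson probability generating function of $N(A)$, and match the result against (\ref{gspppgf}). The only difference is presentational order---you condition on $N(A)$ first and evaluate $\mathbb{E}e^{uX_1Y_1}$ afterwards, whereas the paper does the reverse---but the computation and the logic are identical.
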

\begin{proof}
	The mgf of $X_1Y_1$ is given by
	\begin{equation*}
		\mathbb{E}e^{uX_1Y_1}=\frac{\sum_{i\in\mathcal{I}}\sum_{j=1}^{k}e^{uij}\lambda_j^{(i)}}{\sum_{i\in\mathcal{I}}\sum_{j=1}^{k}\lambda_j^{(i)}},\ u\in\mathbb{R}.
	\end{equation*}
	Thus,
	\begin{align*}
		\mathbb{E}\exp\bigg(u\sum_{r=1}^{N(A)}X_rY_r\bigg)&=\mathbb{E}\bigg(\frac{\sum_{i\in\mathcal{I}}\sum_{j=1}^{k}e^{uij}\lambda_j^{(i)}}{\sum_{i\in\mathcal{I}}\sum_{j=1}^{k}\lambda_j^{(i)}}\bigg)^{N(A)}\\
		&=\exp\bigg(|A|\sum_{j=1}^{k}\sum_{i\in\mathcal{I}}\lambda_j^{(i)}\bigg(\frac{\sum_{i\in\mathcal{I}}\sum_{j=1}^{k}e^{uij}\lambda_j^{(i)}}{\sum_{i\in\mathcal{I}}\sum_{j=1}^{k}\lambda_j^{(i)}}-1\bigg)\bigg),
	\end{align*}
	which coincides with (\ref{gspppgf}). This completes the proof.
\end{proof}
\begin{remark}
	Let $(X_r,Y_r)$'s be random vectors as defined in Proposition \ref{prop61}. For $k=1$, $Y_r$'s are degenerate random variable with total mass at $1$, and $M_i(A)$'s are PRF with rate parameter $\lambda_1^{(i)}$. Hence 
	$
		S(A)|_{k=1}\overset{d}{=}\sum_{r=1}^{N(A)}X_r,
	$
	where $N(A)$ is the PRF with rate parameter $\sum_{i\in\mathcal{I}}\lambda_1^{(i)}$, that is independent of $\{X_r\}_{r\ge1}$. 
\end{remark}

We now consider the case $\mathcal{I}=\{1,-1\}$. Let $\{M_1(A),\ A\in\mathcal{A}_d\}\sim GPRF\{\lambda_j^{(1)}\}_{1\leq j\leq k}$ and $\{M_2(A),\ A\in\mathcal{A}_d\}\sim\{\lambda_j^{(2)}\}_{1\leq j\leq k}$ be independent GPRFs. We consider a point process $\{\mathcal{S}(A),\ A\in\mathcal{A}_d\}$, 
\begin{equation*}
	\mathcal{S}(A)\coloneqq M_1(A)-M_2(A),\ A\in\mathcal{A}_d.
\end{equation*}
Its mean and variance are given by $\mathbb{E}\mathcal{S}(A)=\sum_{j=1}^{k}j(\lambda_j^{(1)}-\lambda_j^{(2)})|A|$ and $\mathbb{V}\mathrm{ar}\mathcal{S}(A)=\sum_{j=1}^{k}j^2(\lambda_j^{(1)}+\lambda_j^{(2)})|A|$, respectively.  

For each $i=1,2$, from Theorem \ref{thmrep}, it follows that there exist independent PRFs $\{N_j^{(i)}(A),\ A\in\mathcal{A}_d\}$ with respective rate parameters $\lambda_j^{(i)}$, $j=1,\dots,k$, such that $M_i(A)\overset{d}{=}\sum_{j=1}^{k}jN_j^{(i)}(A)$. Hence,
\begin{equation*}
	\mathcal{S}(A)\overset{d}{=}\sum_{j=1}^{k}j(N_j^{(1)}(A)-N_j^{(2)}(A)),\ A\in\mathcal{A}_d,
\end{equation*}
where $N_j^{(1)}(\cdot)$ and $N_j^{(2)}(\cdot)$ are independent for all $j$. The process $\{N_j^{(1)}(A)-N_j^{(2)}(A)\}$ is a Skellam random field introduced and studied in \cite{Vishwakarma2025b}.
Thus, $\mathcal{S}(A)$ is a weighted sum of $k$-many independent Skellam random fields. The one dimensional distribution of $N_j^{(1)}(A)-N_j^{(2)}(A)$ is given as follows (see \cite{Vishwakarma2025b}):
\begin{equation*}
	\mathrm{Pr}\{N_j^{(1)}(A)-N_j^{(2)}(A)=n\}=e^{-(\lambda_j^{(1)}+\lambda_j^{(2)})|A|}(\lambda_j^{(1)}/\lambda_j^{(2)})^{n/2}I_{|n|}(2|A|\sqrt{\lambda_j^{(1)}\lambda_j^{(2)}}),\ n\in\mathbb{Z},\ A\in\mathcal{A}_d,
\end{equation*}
where $\mathbb{Z}$ denotes the set of integers. Suppose $\tilde{\Theta}(k,n)=\{(n_1,\dots,n_k)\in\mathbb{Z}^k:n_1+2n_2+\dots+kn_k=n\}$ for all $n\in\mathbb{Z}$. Then, the distribution of $\mathcal{S}(A)$ is given by
\begin{equation}\label{gsppdist1}
	\mathrm{Pr}\{\mathcal{S}(A)=n\}=e^{-(\Lambda^{(1)}+\Lambda^{(2)})|A|}\sum_{\tilde{\Theta}(k,n)}\prod_{j=1}^{k}(\lambda_j^{(1)}/\lambda_j^{(2)})^{n_j/2}I_{|n_j|}(2|A|\sqrt{\lambda_j^{(1)}\lambda_j^{(2)}}),\ n\in\mathbb{Z},\ A\in\mathcal{A}_d,
\end{equation}
where $\Lambda^{(1)}=\lambda_1^{(1)}+\dots+\lambda_k^{(1)}$ and $\Lambda^{(2)}=\lambda_1^{(2)}+\dots+\lambda_k^{(2)}$. Here, $I_\nu$ is the modified Bessel function of first kind defined as (see \cite{Kilbas2006})
\begin{equation*}
	I_\nu(x)=\sum_{m=0}^{\infty}\frac{(x/2)^{2m+\nu}}{m!\Gamma(m+\nu+1)},\ x\in\mathbb{R},\ \nu>-1.
\end{equation*}

\subsection{Generalized Skellam point process on $\mathbb{R}^2_+$} For $d=2$, the process (\ref{gspp}) reduces to a two parameter process $\{S(s,t),\ (s,t)\in\mathbb{R}^2_+\}$, defined as follows:
\begin{equation}\label{gspp2}
	S(s,t)\coloneqq \sum_{i\in\mathcal{I}}iM_i(s,t),\ (s,t)\in\mathbb{R}^2_+,
\end{equation}
where $\{M_i(s,t),\ (s,t)\in\mathbb{R}^2_+\}\sim GPRF\{\lambda_j^{(i)}\}_{1\leq j\leq k}$, $i\in\mathcal{I}$ are independent GPRFs. We call $\{S(s,t),\ (s,t)\in\mathbb{R}^2_+\}$  the generalized Skellam point process (GSPP) and denote it by $\{S(s,t),\ (s,t)\in\mathbb{R}^2_+\}\sim GSPP\{\lambda_j^{(i)},\ i\in\mathcal{I}\}_{1\leq j\leq k}$. The GSPP is a two parameter L\'evy process with rectangular increments (for definition, see Section \ref{gprf2sec}). Its mgf is given by
\begin{equation}\label{gspppgf2}
	\mathbb{E}e^{uS(s,t)}=\exp\bigg(st\sum_{i\in\mathcal{I}}\sum_{j=1}^{k}\lambda_j^{(i)}(e^{iju}-1)\bigg),\ u\in\mathbb{R}.
\end{equation}
\begin{remark}
	Suppose $\{S(s,t),\ (s,t)\in\mathbb{R}^2_+\}\sim GSPP\{\lambda_j^{(i)},\ i\in\mathcal{I}\}_{1\leq j\leq k}$. Let $\{(X_r,Y_r)\}_{r\ge1}$ be a sequence as in Proposition \ref{prop61}, which is independent of a two parameter PRF $\{N(s,t),\ (s,t)\in\mathbb{R}^2_+\}$ with rate parameter $\sum_{i\in\mathcal{I}}\sum_{j=1}^{k}\lambda_j^{(i)}$. Then, $S(s,t)\overset{d}{=}\sum_{r=1}^{N(s,t)}X_rY_r$ for all $(s,t)\in\mathbb{R}^2_+$.  
\end{remark}
\begin{theorem}
	For $m,l\ge1$ and $n\ge1$, let $p^{(n)}_{m,l,i,j}\in(0,1)$ for all $(i,j)\in\mathcal{I}\times\{1,2,\dots,k\}$ such that $\sum_{i\in\mathcal{I}}\sum_{j=1}^{k}p^{(n)}_{m,l,i,j}<1$. Let $\{(X_{m,l}^{(n)},Y_{m,l}^{(n)})\}_{m,l,n\ge1}$ be a triple indexed sequence of random vectors independent across $m$, $l$ and $n$ such that
	\begin{equation*}
		\mathrm{Pr}\{(X_{m,l}^{(n)},Y_{m,l}^{(n)})=(i,j)\}\coloneqq p_{m,l,i,j}^{(n)}\ \text{for}\ (i,j)\in\mathcal{I}\times\{1,2,\dots,k\}
	\end{equation*}
	and
	\begin{equation*}
		\mathrm{Pr}\{(X_{m,l}^{(n)},Y_{m,l}^{(n)})\in\{(i,0), (0,j), (0,0)\}_{i\in\mathcal{I},1\leq j\leq k}\}=1-\sum_{i\in\mathcal{I}}\sum_{j=1}^{k}p^{(n)}_{m,l,i,j}.
	\end{equation*}
	Let $\{Z^{(n)}(s,t),\ (s,t)\in\mathbb{R}^2_+\}$ be a two parameter process defined as 
	\begin{equation*}
		Z^{(n)}(s,t)\coloneqq \sum_{m=1}^{[ns]}\sum_{l=1}^{[nt]}X_{m,l}^{(n)}Y_{m,l}^{(n)},\ (s,t)\in\mathbb{R}^2_+.
	\end{equation*}
	If 
	\begin{equation}\label{thm61lim2}
		\sum_{m=1}^{[ns]}\sum_{l=1}^{[nt]}p_{m,l,i,j}^{(n)}\rightarrow \lambda_j^{(i)}st,\ (s,t)\in\mathbb{R}^2_+\ \text{and}\ \sup_{1\leq m,\,l\leq n}p_{m,l,i,j}^{(n)}\rightarrow0,
	\end{equation}
	as $n\rightarrow\infty$ for all $(i,j)\in\mathcal{I}\times\{1,2,\dots,k\}$, then for any $(s_1,t_1)\preceq(s_2,t_2)\preceq\dots\preceq(s_r,t_r)$ in $\mathbb{R}^2_+$, we have the following convergence:
	\begin{equation*}
		(Z^{(n)}(s_1,t_1),\dots,Z^{(n)}(s_r,t_r))\overset{d}{\rightarrow}(S(s_1,t_1),\dots,S(s_r,t_r))\ \text{as}\ n\rightarrow\infty.
	\end{equation*}
\end{theorem}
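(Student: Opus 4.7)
The plan is to exploit the fact that both the approximating process $\{Z^{(n)}(s,t)\}$ and the limit $\{S(s,t)\}$ have independent rectangular increments, so that joint convergence of the finite-dimensional distributions reduces to convergence of the marginal distribution of increments over each rectangle, which in turn reduces to a Laplace-type (moment generating function) computation. Since the summands $X_{m,l}^{(n)}Y_{m,l}^{(n)}$ take values in the bounded set $\{ij : (i,j)\in\mathcal{I}\times\{1,\dots,k\}\}\cup\{0\}$, all moment generating functions are entire and there is no integrability obstruction, so I will work throughout with MGFs and appeal to L\'evy's continuity theorem on $\mathbb{R}$ (or rather its multivariate analogue applied to the vector of independent increments).

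First I would fix $(s_1,t_1)\preceq\cdots\preceq(s_r,t_r)$ and decompose each $Z^{(n)}(s_q,t_q)$ as a sum of increments of $Z^{(n)}$ over a common disjoint partition of $[0,s_r]\times[0,t_r]$ into rectangles of the form $(s_{q_1-1},s_{q_1}]\times(t_{q_2-1},t_{q_2}]$. Because the random vectors $(X_{m,l}^{(n)},Y_{m,l}^{(n)})$ are independent across $(m,l)$, the increments of $Z^{(n)}$ over disjoint rectangles are independent; by (\ref{gspppgf2}) and the L\'evy property from Section \ref{gprf2sec}, the same is true of $S$. Hence it suffices to prove, for every rectangle $(s,s']\times(t,t']$ and every $u\in\mathbb{R}$, the convergence
\begin{equation*}
\mathbb{E}\exp\!\bigl(u\,\Delta_{s,t}Z^{(n)}(s',t')\bigr)\longrightarrow\exp\!\Bigl((s'-s)(t'-t)\sum_{i\in\mathcal{I}}\sum_{j=1}^{k}\lambda_j^{(i)}(e^{iju}-1)\Bigr),
\end{equation*}
and then assemble the joint MGF of the increment vector as a product.

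For the single-rectangle MGF I would compute
\begin{equation*}
\mathbb{E}e^{uX_{m,l}^{(n)}Y_{m,l}^{(n)}}=1+\sum_{i\in\mathcal{I}}\sum_{j=1}^{k}p_{m,l,i,j}^{(n)}(e^{iju}-1),
\end{equation*}
using that all the exceptional outcomes $(i,0),(0,j),(0,0)$ contribute $0$ to the product $X_{m,l}^{(n)}Y_{m,l}^{(n)}$. Taking logarithms and summing over $([ns],[ns']]\times([nt],[nt']]$, I would then invoke the elementary expansion $\log(1+x)=x+O(x^2)$ valid for $|x|$ small, writing
\begin{equation*}
\sum_{m,l}\log\!\Bigl(1+\sum_{i,j}p_{m,l,i,j}^{(n)}(e^{iju}-1)\Bigr)=\sum_{m,l}\sum_{i,j}p_{m,l,i,j}^{(n)}(e^{iju}-1)+R_n(u),
\end{equation*}
where the remainder $R_n(u)$ is bounded in absolute value by a constant (depending on $u,\mathcal I,k$) times $\sum_{m,l}(\sum_{i,j}p_{m,l,i,j}^{(n)})^{2}\le\bigl(\sup_{m,l,i,j}p_{m,l,i,j}^{(n)}\bigr)\cdot\sum_{m,l}\sum_{i,j}p_{m,l,i,j}^{(n)}$. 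Both hypotheses in (\ref{thm61lim2}) then ensure the first sum converges to $(s'-s)(t'-t)\sum_{i,j}\lambda_j^{(i)}(e^{iju}-1)$ while $R_n(u)\to 0$, giving the desired one-rectangle convergence.

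The only genuine technicality is verifying that the remainder term vanishes uniformly enough to justify the $\log$ expansion simultaneously on the whole grid; this is a standard triangular-array bound and is exactly what the smallness condition $\sup_{m,l}p_{m,l,i,j}^{(n)}\to 0$ is designed to deliver, so I expect no serious obstacle beyond careful bookkeeping. Once the one-rectangle MGF convergence is established, the finite-dimensional distribution convergence follows by applying L\'evy's continuity theorem (or, equivalently, the Cram\'er--Wold device together with convergence of real-valued MGFs on a neighborhood of the origin) to the vector of rectangular increments, and then recovering $(Z^{(n)}(s_1,t_1),\dots,Z^{(n)}(s_r,t_r))$ by the continuous linear map that sums the appropriate sub-collections of increments.
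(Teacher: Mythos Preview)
Your proposal is correct and follows essentially the same route as the paper: reduce finite-dimensional convergence to convergence of the mgf of a single rectangular increment via the independent-increments structure, compute $\mathbb{E}e^{uX_{m,l}^{(n)}Y_{m,l}^{(n)}}=1+\sum_{i,j}p_{m,l,i,j}^{(n)}(e^{iju}-1)$, and pass to the limit using the first-order expansion of the logarithm together with the two hypotheses in (\ref{thm61lim2}). The only cosmetic difference is that the paper outsources the reduction step to Theorem~3.1 of \cite{Vishwakarma2025b} and writes the log-expansion as $\ln(1+x)\sim x$, whereas you spell out the rectangle partition and control the $O(x^2)$ remainder explicitly; your version is slightly more self-contained but not a different argument.
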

\begin{proof}
For $(s,t)\preceq(s',t')$, the rectangular increment of $Z^{(n)}(s,t)$ is given by
\begin{align*}
	\Delta_{s,t}Z^{(n)}(s',t')&=\sum_{m=1}^{[ns']}\sum_{l=1}^{[nt']}X_{m,l}^{(n)}Y_{m,l}^{(n)}-\sum_{m=1}^{[ns']}\sum_{l=1}^{[nt]}X_{m,l}^{(n)}Y_{m,l}^{(n)}-\sum_{m=1}^{[ns]}\sum_{l=1}^{[nt']}X_{m,l}^{(n)}Y_{m,l}^{(n)}+\sum_{m=1}^{[ns]}\sum_{l=1}^{[nt]}X_{m,l}^{(n)}Y_{m,l}^{(n)}\\
	&=\sum_{m=[ns]+1}^{[ns']}\sum_{l=[nt]}^{[nt']}X_{m,l}^{(n)}Y_{m,l}^{(n)}.
\end{align*}
Thus, $\{Z^{(n)}(s,t),\ (s,t)\in\mathbb{R}^2_+\}$ has independent increments. Now, following the arguments of proof of Theorem 3.1 of \cite{Vishwakarma2025b}, it is enough to show that $\Delta_{s,t}Z^{(n)}(s',t')\overset{d}{\rightarrow}\Delta_{s,t}S(s',t')$ as $n\rightarrow\infty$. The mgf of $\Delta_{s,t}Z^{(n)}(s',t')$ is given by
\begin{align*}
	\mathbb{E}\exp(u\Delta_{s,t}Z^{(n)}(s',t'))&=\prod_{m=[ns]+1}^{[ns']}\prod_{l=[nt]+1}^{[nt']}\mathbb{E}\exp(uX_{m,l}^{(n)}Y_{m,l}^{(n)})\\
	&=\prod_{m=[ns]+1}^{[ns']}\prod_{l=[nt]+1}^{[nt']}\Big(\sum_{i\in\mathcal{I}}\sum_{j=1}^{k}e^{iju}p_{m,l,i,j}^{(n)}+1-\sum_{i\in\mathcal{I}}\sum_{j=1}^{k}p_{m,l,i,j}^{(n)}\Big)\\
	&=\exp\bigg(\sum_{m=[ns]+1}^{[ns']}\sum_{l=[nt]+1}^{[nt']}\ln\bigg(\sum_{i\in\mathcal{I}}\sum_{j=1}^{k}p_{m,l,i,j}^{(n)}(e^{iju}-1)+1\bigg)\bigg)\\
	&\sim \exp\bigg(\sum_{m=[ns]+1}^{[ns']}\sum_{l=[nt]+1}^{[nt']}\sum_{i\in\mathcal{I}}\sum_{j=1}^{k}p_{m,l,i,j}^{(n)}(e^{iju}-1)\bigg),
\end{align*}
where the penultimate step follows from the second condition of (\ref{thm61lim2}), and using the approximation $\ln(x+1)\sim x$ as $x\rightarrow0$. On using the first condition of (\ref{thm61lim2}), we get
\begin{equation*}
	\lim_{n\rightarrow\infty}\mathbb{E}\exp(u\Delta_{s,t}Z^{(n)}(s',t'))=\exp\bigg((s'-s)(t'-t)\sum_{i\in\mathcal{I}}\sum_{j=1}^{k}\lambda_j^{(i)}(e^{iju}-1)\bigg),\ u\in\mathbb{R}.
\end{equation*}
From (\ref{gspppgf2}), the proof follows using the stationary increments property of GSPP.
\end{proof}

Let $\{M_i(s,t),\ (s,t)\in\mathbb{R}^2_+\}\sim GPRF\{\lambda_j^{(i)}\}_{1\leq j\leq k}$, $i=1,2$ be independent GPRFs. We consider a two parameter process $\{S(s,t),\ (s,t)\in\mathbb{R}^2_+\}$,
\begin{equation*}
	S(s,t)\coloneqq M_1(s,t)-M_2(s,t),\ (s,t)\in\mathbb{R}^2_+.
\end{equation*}
We denote it as $\{\mathcal{S}(s,t),\ (s,t)\in\mathbb{R}^2_+\}\sim GSPP\{\lambda_j^{(1)},\ \lambda_j^{(2)}\}_{1\leq j\leq k}$. Its mean and variance are given by $\mathbb{E}\mathcal{S}(s,t)=\sum_{j=1}^{k}j(\lambda_j^{(1)}-\lambda_j^{(2)})st$ and $\mathbb{V}\mathrm{ar}\mathcal{S}(s,t)=\sum_{j=1}^{k}j^2(\lambda_j^{(1)}+\lambda_j^{(2)})st$, respectively. From (\ref{gsppdist1}), its one dimensional distribution is given by
\begin{equation}\label{gsppdist}
	q(n,s,t)=e^{-(\Lambda^{(1)}+\Lambda^{(2)})st}\sum_{\tilde{\Theta}(k,n)}\prod_{j=1}^{k}(\lambda_j^{(1)}/\lambda_j^{(2)})^{n_j/2}I_{|n_j|}(2st\sqrt{\lambda_j^{(1)}\lambda_j^{(2)}}),\ n\in\mathbb{Z},\ (s,t)\in\mathbb{R}^2_+.
\end{equation}
Its pgf is 
\begin{equation}\label{gspppgf3}
	\mathbb{E}z^{\mathcal{S}(s,t)}=\exp\Big(st\sum_{j=1}^{k}(\lambda_j^{(1)}(z^j-1)+\lambda_j^{(2)}(z^{-j}-1))\Big),\ z\in(0,1].
	\end{equation}
	
\subsection{Fractional GSPP on $\mathbb{R}^2_+$} For $0< \alpha,\beta< 1$, let $\{L^\alpha(t),\ t\ge0\}$ and $\{L^\beta(t),\ t\ge0\}$ be inverse stable subordinators. Suppose $\{\mathcal{S}(s,t),\ (s,t)\in\mathbb{R}^2_+\}\sim GSPP\{\lambda_j^{(1)},\ \lambda_j^{(2)}\}_{1\leq j\leq k}$. Let us consider a time-changed two parameter process $\{\mathcal{S}^{\alpha,\beta}(s,t),\ (s,t)\in\mathbb{R}^2_+\}$,
\begin{equation}\label{fgspp1}
	\mathcal{S}^{\alpha,\beta}(s,t)\coloneqq \mathcal{S}(L^\alpha(s),L^\beta(t)),\ (s,t)\in\mathbb{R}^2_+,
\end{equation}
where all the component processes are independent of each other. For $(s,t)\preceq(s',t')$, from Theorem 2.2 of \cite{Vishwakarma2025d}, its mean, variance and auto covariance are 
\begin{align*}
	\mathbb{E}\mathcal{S}^{\alpha,\beta}(s,t)&=\sum_{j=1}^{k}j(\lambda_j^{(1)}-\lambda_j^{(2)})\frac{s^\alpha t^\beta}{\Gamma(\alpha+1)\Gamma(\beta+1)},\\
	\mathbb{V}\mathrm{ar}\mathcal{S}^{\alpha,\beta}(s,t)&=\sum_{j=1}^{k}j^2(\lambda_j^{(1)}+\lambda_j^{(2)})\mathbb{E}(L^\alpha(s)L^\beta(t))+\mathbb{V}\mathrm{ar}L^\alpha(s)L^\beta(t)(\sum_{j=1}^{k}j(\lambda_j^{(1)}-\lambda_j^{(2)}))^2
\end{align*}
and
\begin{align*}
	\mathbb{C}\mathrm{ov}(\mathcal{S}^{\alpha,\beta}(s,t),\mathcal{S}^{\alpha,\beta}(s',t'))&=\sum_{j=1}^{k}j^2(\lambda_j^{(1)}+\lambda_j^{(2)})\mathbb{E}(L^\alpha(s)L^\beta(t))\\
	&\ \ +(\sum_{j=1}^{k}j(\lambda_j^{(1)}-\lambda_j^{(2)}))^2\mathbb{C}\mathrm{ov}(L^\alpha(s)L^\beta(t),L^\alpha(s')L^\beta(t')),
\end{align*}
respectively. Here, $\mathbb{E}(L^\alpha(s)L^\beta(t))$ and $\mathbb{C}\mathrm{ov}(L^\alpha(s)L^\beta(t),L^\alpha(s')L^\beta(t'))$ are as given in (\ref{expinsub}) and (\ref{covinsub}), respectively, and 
\begin{equation*}
\mathbb{V}\mathrm{ar}L^\alpha(s)L^\beta(t)=s^{2\alpha}t^{2\beta}\bigg(\frac{4}{\Gamma(2\alpha+1)\Gamma(2\beta+1)}-\frac{1}{\Gamma(\alpha+1)^2\Gamma(\beta+1)^2}\bigg).
\end{equation*}
\begin{proposition}
	The pgf of (\ref{fgspp1}) is given by
	\begin{equation*}
		\mathbb{E}z^{\mathcal{S}^{\alpha,\beta}(s,t)}={}_2\Psi_2\Bigg[\begin{matrix}
			(1,1),&(1,1)\\\\
			(1,\alpha),&(1,\beta)
		\end{matrix}\bigg|-\phi(z) s^\alpha t^\beta\Bigg],\ z\in(0,1],\ (s,t)\in\mathbb{R}^2_+,
	\end{equation*}
	where $\phi(z)=\sum_{j=1}^{k}(\lambda_j^{(1)}(1-z^j)+\lambda_j^{(2)}(1-z^{-j}))$.
\end{proposition}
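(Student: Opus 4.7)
The plan is to mirror the derivation of the pgf of the FGPRF carried out earlier in Section~\ref{sec4}. Starting from the independence of $\mathcal{S}$, $L^\alpha$ and $L^\beta$, I condition on the time change and use (\ref{gspppgf3}), which I rewrite compactly as $\mathbb{E}z^{\mathcal{S}(x,y)}=\exp(-xy\phi(z))$, to obtain
\begin{equation*}
	\mathbb{E}z^{\mathcal{S}^{\alpha,\beta}(s,t)}=\iint_{\mathbb{R}^2_+}\exp(-xy\phi(z))\,\mathrm{Pr}\{L^\alpha(s)\in\mathrm{d}x\}\mathrm{Pr}\{L^\beta(t)\in\mathrm{d}y\}.
\end{equation*}
This plays the same role that equation (\ref{pgfeqpf1}) did in the FGPRF case, with the scalar $\phi(z)$ replacing $-\sum_j\lambda_j(z^j-1)$.

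Next I compute the double Laplace transform in $(s,t)$ using (\ref{insublap}) to replace each inverse-subordinator density. The $x$-integral is elementary, $\int_0^\infty e^{-(u^\alpha+y\phi(z))x}\mathrm{d}x=(u^\alpha+y\phi(z))^{-1}$, and yields
\begin{equation*}
	\iint_{\mathbb{R}^2_+} e^{-us-vt}\mathbb{E}z^{\mathcal{S}^{\alpha,\beta}(s,t)}\,\mathrm{d}s\,\mathrm{d}t=\int_0^\infty \frac{u^{\alpha-1}v^{\beta-1}}{u^\alpha+y\phi(z)}\,e^{-v^\beta y}\,\mathrm{d}y.
\end{equation*}
Inverting in $u$ via the Mittag-Leffler identity (\ref{opmllap}) introduces $E_{\alpha,1}(-y\phi(z)s^\alpha)$; expanding this function through its series (\ref{opml}), interchanging sum and integral by Fubini, and using $\int_0^\infty y^r e^{-v^\beta y}\mathrm{d}y=\Gamma(r+1)v^{-\beta(r+1)}$ produces
\begin{equation*}
	\int_0^\infty e^{-vt}\mathbb{E}z^{\mathcal{S}^{\alpha,\beta}(s,t)}\,\mathrm{d}t=\sum_{r=0}^\infty\frac{(-\phi(z)s^\alpha)^r\,\Gamma(r+1)}{\Gamma(\alpha r+1)\,v^{\beta r+1}}.
\end{equation*}
Term-by-term inversion in $v$, using $\mathcal{L}^{-1}\{v^{-(\beta r+1)}\}=t^{\beta r}/\Gamma(\beta r+1)$, then gives
\begin{equation*}
	\mathbb{E}z^{\mathcal{S}^{\alpha,\beta}(s,t)}=\sum_{r=0}^{\infty}\frac{(\Gamma(r+1))^2(-\phi(z)s^\alpha t^\beta)^r}{\Gamma(\alpha r+1)\Gamma(\beta r+1)\,r!},
\end{equation*}
which is exactly the ${}_2\Psi_2$ in the statement, by the definition (\ref{gwright}).

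The main technical point is the justification of the repeated Fubini exchanges and of the term-by-term Laplace inversion. A subtlety here, compared to the FGPRF proof, is that $\phi(z)$ does not have a definite sign on $(0,1]$: the contribution $\lambda_j^{(1)}(1-z^j)$ from the positive jumps of $\mathcal{S}$ is nonnegative, while $\lambda_j^{(2)}(1-z^{-j})$ from the negative jumps is nonpositive. However, on the range of $z$ for which $\mathbb{E}z^{\mathcal{S}(s,t)}$ in (\ref{gspppgf3}) is finite, the pgf of the time-changed process is bounded by conditioning, and the resulting generalized Wright series converges entirely in its argument (as ${}_2\Psi_2$ with $\alpha,\beta>0$ is an entire function). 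Once these analytical interchanges are verified, the computation is a direct adaptation of the derivation leading to (\ref{fgprfpgf}), with $\phi(z)$ in place of $-\sum_{j=1}^{k}\lambda_j(z^j-1)$.
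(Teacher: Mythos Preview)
Your proposal is correct and follows essentially the same route as the paper's proof: condition on the time change via (\ref{gspppgf3}), take the double Laplace transform using (\ref{insublap}), perform the elementary $x$-integral, invert in $u$ through the Mittag--Leffler identity (\ref{opmllap}), expand the resulting $E_{\alpha,1}$ as a power series, integrate in $y$, and invert term by term in $v$. Your additional remarks on the sign of $\phi(z)$ and the justification of the Fubini exchanges go slightly beyond what the paper makes explicit, but the argument itself is the same.
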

\begin{proof}
	From (\ref{fgspp1}), we have
	\begin{equation*}
		\mathbb{E}z^{\mathcal{S}^{\alpha,\beta}(s,t)}=\iint_{\mathbb{R}^2_+}\mathbb{E}z^{\mathcal{S}(x,y)}\mathrm{Pr}\{L^\alpha(s)\in\mathrm{d}x,\,L^\beta(t)\in\mathrm{d}y\},\ z\in(0,1].
	\end{equation*}
	On using (\ref{insublap}), its double Laplace transform is given by
	\begin{align*}
		\iint_{\mathbb{R}^2_+}e^{-us-vt}\mathbb{E}z^{\mathcal{S}^{\alpha,\beta}(s,t)}\,\mathrm{d}s\,\mathrm{d}t&=u^{\alpha-1}v^{\beta-1}\iint_{\mathbb{R}^2_+}e^{-xy\phi(z)}e^{-u^\alpha x}e^{-v^\beta y}\,\mathrm{d}x\,\mathrm{d}y\\
		&=u^{\alpha-1}v^{\beta-1}\int_{0}^{\infty}\frac{e^{-v^\beta y}}{u^\alpha+y\phi(z)}\,\mathrm{d}y,
	\end{align*}
	where we have used (\ref{gspppgf3}). Its inversion with respect to $u$ gives
	\begin{align*}
		\int_{0}^{\infty}e^{-vt}\mathbb{E}z^{\mathcal{S}^{\alpha,\beta}(s,t)}\,\mathrm{d}t&=v^{\beta-1}\int_{0}^{\infty}E_{\alpha,1}(-y\phi(z)s^\alpha)e^{-v^\beta y}\,\mathrm{d}y\\
		&=v^{\beta-1}\sum_{m=0}^{\infty}\frac{(-\phi(z)s^\alpha)^m}{\Gamma(m\alpha+1)}\int_{0}^{\infty}y^me^{-v^\beta y}\,\mathrm{d}y\\
		&=\sum_{m=0}^{\infty}\frac{(-\phi(z)s^\alpha)^m\Gamma(m+1)}{\Gamma(m\alpha+1)v^{\beta m+1}},
	\end{align*} 
	whose inverse Laplace transform yields
	\begin{equation*}
		\mathbb{E}z^{\mathcal{S}^{\alpha,\beta}(s,t)}=\sum_{m=0}^{\infty}\frac{(-\phi(z)s^\alpha t^\beta)^m\Gamma(m+1)}{\Gamma(m\alpha+1)\Gamma(\beta m+1)}.
	\end{equation*}
	This completes the proof.
\end{proof}

The following result gives the state probabilities of (\ref{fgspp1}). On using (\ref{gsppdist}), its proof follows similar lines to that of Theorem \ref{thm41}. Hence, we omit it.
\begin{theorem}
	The state probabilities of (\ref{fgspp1}) is given by
	\begin{align}\label{fgsppdist}
		\mathrm{Pr}&\{\mathcal{S}^{\alpha,\beta}(s,t)=n\}\nonumber\\
		&=\sum_{\tilde{\Theta}(k,n)}\prod_{j=1}^{k}\bigg(\frac{\lambda_j^{(1)}}{\lambda_j^{(2)}}\bigg)^{n_j/2}\sum_{m_1,\dots,m_k=0}^{\infty}\prod_{j=1}^{k}\frac{(\sqrt{\lambda_j^{(1)}\lambda_j^{(2)}}s^\alpha t^\beta)^{2m_j+|n_j|}}{(|n_j|+m_j)!m_j!}\nonumber\\
		&\ \ \cdot{}_2\Psi_2\Bigg[\begin{matrix}
			(\sum_{j=1}^{k}(2m_j+|n_j|)+1,1)&(\sum_{j=1}^{k}(2m_j+|n_j|)+1,1)\\\\
			(\sum_{j=1}^{k}(2m_j+|n_j|)\alpha+1,\alpha)&(\sum_{j=1}^{k}(2m_j+|n_j|)\beta+1,\beta)
		\end{matrix}\bigg|-(\Lambda^{(1)}+\Lambda^{(2)})s^\alpha t^\beta\Bigg],\ n\in\mathbb{Z}.
	\end{align}
\end{theorem}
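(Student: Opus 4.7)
The plan is to mimic the Laplace-transform strategy from Theorem \ref{thm41}, with the new wrinkle that the modified Bessel functions in (\ref{gsppdist}) must be expanded in series before the double Laplace transform can be inverted. First, I would start by conditioning on the pair of inverse stable subordinators,
\begin{equation*}
\mathrm{Pr}\{\mathcal{S}^{\alpha,\beta}(s,t)=n\}=\iint_{\mathbb{R}^2_+}\mathrm{Pr}\{\mathcal{S}(x,y)=n\}\mathrm{Pr}\{L^\alpha(s)\in\mathrm{d}x\}\mathrm{Pr}\{L^\beta(t)\in\mathrm{d}y\},
\end{equation*}
and substitute the explicit form of $\mathrm{Pr}\{\mathcal{S}(x,y)=n\}$ from (\ref{gsppdist}). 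Next, I would expand each $I_{|n_j|}(2xy\sqrt{\lambda_j^{(1)}\lambda_j^{(2)}})$ in its defining power series, so that after an application of Fubini's theorem the only remaining $(x,y)$ dependence is $(xy)^{M}e^{-(\Lambda^{(1)}+\Lambda^{(2)})xy}$, where $M:=\sum_{j=1}^{k}(2m_j+|n_j|)$ is the total index.

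The bulk of the calculation is then parallel to the proof of Theorem \ref{thm41}. I would take the double Laplace transform with respect to $(s,t)$ and use (\ref{insublap}) to replace the two inverse-stable densities by $u^{\alpha-1}v^{\beta-1}e^{-u^\alpha x-v^\beta y}$, then perform the elementary integral in $x$ (a Gamma integral) to obtain the factor $\Gamma(M+1)/(u^\alpha+(\Lambda^{(1)}+\Lambda^{(2)})y)^{M+1}$. Inverting the $u$-Laplace transform through (\ref{tpmllap}) with parameters $\gamma=M+1$ and $\beta=\alpha M+1$ produces $s^{\alpha M}E^{M+1}_{\alpha,\alpha M+1}(-(\Lambda^{(1)}+\Lambda^{(2)})ys^\alpha)$, and expanding this three-parameter Mittag-Leffler function term by term via (\ref{tpml}) lets me carry out the $y$-integral and then the $v$-Laplace inversion, yielding a double series in $r$ and in $(m_1,\dots,m_k)$.

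Finally, I would collect the $r$-series, recognizing that the factor $s^{\alpha M}t^{\beta M}$ distributes over the product as $\prod_{j=1}^{k}(s^\alpha t^\beta)^{2m_j+|n_j|}$, which combines with $\prod_j(\sqrt{\lambda_j^{(1)}\lambda_j^{(2)}})^{2m_j+|n_j|}$ to give exactly $\prod_j(\sqrt{\lambda_j^{(1)}\lambda_j^{(2)}}s^\alpha t^\beta)^{2m_j+|n_j|}$, and that the $r$-sum with summand $(\Gamma(M+1+r))^2(-(\Lambda^{(1)}+\Lambda^{(2)})s^\alpha t^\beta)^{r}/(\Gamma(\alpha r+\alpha M+1)\Gamma(\beta r+\beta M+1)r!)$ matches the generalized Wright function in (\ref{gwright}) with the stated parameters.

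The main obstacle is not any single identity but the bookkeeping: justifying the repeated interchanges of sum, integral, and inverse Laplace transform by a Fubini/Tonelli argument, and then carefully matching exponents so that the double series over $\tilde{\Theta}(k,n)$ and $(m_1,\dots,m_k)$ collapses into the compact product form prescribed in (\ref{fgsppdist}). Since the inner $r$-series is absolutely convergent for all $(s,t)$ (the generalized Wright function ${}_2\Psi_2$ here is entire in its argument) and the coefficients are non-negative before summing over $\tilde\Theta(k,n)$, the interchanges are routine, and the computation then reduces to the same formal template as Theorem \ref{thm41}.
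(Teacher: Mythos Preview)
Your proposal is correct and matches the paper's intended approach exactly: the paper omits the proof, stating only that ``on using (\ref{gsppdist}), its proof follows similar lines to that of Theorem \ref{thm41},'' and you have accurately reconstructed precisely that argument, including the one additional step (the series expansion of the modified Bessel functions $I_{|n_j|}$) needed to reduce the integrand to the form $(xy)^M e^{-(\Lambda^{(1)}+\Lambda^{(2)})xy}$ before applying the Laplace-transform machinery of Theorem~\ref{thm41}.
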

\begin{remark}
	For $k=1$, the state probabilities (\ref{fgsppdist}) reduces to 
	\begin{align*}
		\bigg(\frac{\lambda_1^{(1)}}{\lambda_1^{(2)}}\bigg)^{n/2}&\sum_{m=0}^{\infty}\frac{(\sqrt{\lambda_1^{(1)}\lambda_1^{(2)}}s^\alpha t^\beta)^{2m+|n|}}{(|n|+m)!m!}\nonumber\\
		&\ \ \cdot{}_2\Psi_2\Bigg[\begin{matrix}
			(2m+|n|+1,1)&(2m+|n|)+1,1)\\\\
			((2m+|n|)\alpha+1,\alpha)&((2m+|n|)\beta+1,\beta)
		\end{matrix}\bigg|-(\lambda^{(1)}_1+\lambda^{(2)}_1)s^\alpha t^\beta\Bigg],\ n\in\mathbb{Z},
	\end{align*}
	which equal the point probabilities of a fractional random field, given in Eq. (4.8) of \cite{Vishwakarma2025b}.
\end{remark}
\vspace{0.4cm}

\paragraph{\textbf{Acknowledgement}} This work is partially supported by the National Post Doctoral Fellowship, PDF/2025/000076, from Anusandhan National Research Foundation, Govt. of India.

\end{document}